\documentclass[reqno, english]{amsart}
\usepackage{amsfonts}
\usepackage{amsthm}
\usepackage{amsmath}
\usepackage{amscd}
\usepackage[latin2]{inputenc}
\usepackage{t1enc}
\usepackage[mathscr]{eucal}
\usepackage{indentfirst}
\usepackage{graphicx}
\usepackage{graphics}
\usepackage{pict2e}
\usepackage{epic}
\usepackage{bbm}
\usepackage{bm}
\usepackage[left=1in,right=1in,top=1in,bottom=1in]{geometry}
\usepackage{epstopdf} 
\usepackage{enumitem}

\usepackage{amssymb}

\theoremstyle{plain}
\newtheorem{thm}{Theorem}[section]

\newtheorem{lemma}[thm]{Lemma}
\newtheorem{cor}[thm]{Corollary}
\newtheorem{prop}[thm]{Proposition}

\newtheorem{obs}[thm]{Observation}
\newtheorem{conj}[thm]{Conjecture}

\newtheorem*{claim*}{Claim}



\newcommand{\R}{\mathcal R}

\newcommand{\NN}{\mathbb{N}}

\newcommand{\prr}{\pr}
\newcommand{\pr}{\mathbb{P}}

\newcommand{\E}[0]{\mathbb{E}}

\newcommand{\beq}[1]{\begin{equation}\label{#1}}
\newcommand{\enq}[0]{\end{equation}}

\newcommand{\mn}[0]{\medskip\noindent}
\newcommand{\nin}[0]{\noindent}

\newcommand{\sub}[0]{\subseteq}
\newcommand{\sm}[0]{\setminus}

\newcommand{\ov}[0]{\overline}

\renewcommand{\dots}[0]{,\ldots,}

\newcommand{\ra}[0]{\rightarrow}
\newcommand{\Ra}[0]{\Rightarrow}



\newcommand{\0}[0]{\emptyset}

\newcommand{\C}[2]{\binom{{#1}}{{#2}}}
\newcommand{\Cc}[0]{\tbinom}

\newcommand{\gc}[0]{\gamma }
\newcommand{\gd}[0]{\delta }

\newcommand{\go}[0]{\omega}
\newcommand{\gO}[0]{\Omega}

\newcommand{\gs}[0]{\sigma}

\newcommand{\eps}[0]{\varepsilon }
\newcommand{\vt}[0]{\vartheta}
\newcommand{\vs}[0]{\varsigma}

\newcommand{\vp}[0]{\varphi}



\newcommand{\prh}[1][]{\pr_h}

\usepackage{xcolor}


\setlength{\textwidth}{15.9cm} \setlength{\textheight}{23cm}
\setlength{\parskip}{2mm} \setlength{\parindent}{6mm}

\addtolength{\textheight}{9.0mm}
\addtolength{\footskip}{9.0mm}

\setcounter{equation}{0}






\newcommand{\Var}[0]{{\rm Var}}

\newcommand{\win}[0]{{\rm win}}

\newcommand{\mbN}[0]{\mathbb{N}}

\newcommand{\recovby}[0]{<\hspace{-0.07in}\cdot~}

\newcommand{\Bep}[0]{D}
\newcommand{\Cep}[0]{C}
\newcommand{\Dep}[0]{D}
\newcommand{\Tep}[0]{T}

\title[Variance vs. range, and balancing in posets of bounded width]{Variance vs.\ range for linear extensions, and 
balancing extensions in posets of bounded width.}

\author{Max Aires, Jeff Kahn}

\begin{document}

\maketitle
\begin{abstract}
An old conjecture of Kahn and Saks says, roughly, that any poset $P$ of
large enough width contains elements $x,y$ which are ``balanced'' in the sense that 
the probability that $x$ precedes $y$ in a uniformly random linear extension of $P$
is close to $1/2$. 
We show this implies the seemingly stronger statement that the same conclusion
holds if, instead of large width, we assume only that, for some $x$, the number, $\pi(x)$, of 
elements of $P$ incomparable to $x$ is large.
The implication follows from our two main results:
first, that if $\pi(P):=\max \pi(x)$ is large then $P$ has large variance, i.e.\ 
there is a $y$ whose position in a uniform extension of $P$ has large variance; 
and second, that the conclusion of the Kahn-Saks Conjecture holds for $P$
with large variance and \emph{bounded} width.
These two assertions also yield an easy proof of a (not easy) result of Chan, Pak and Panova
on ``sorting probabilities'' for Young diagrams, together with its natural generalization to higher dimensions.


\end{abstract}

\section{Introduction}

Our main results are Theorems~\ref{pi-var} and \ref{Twsig}, and their
consequences, Theorems~\ref{Cpi} and \ref{TKSpi}.
Before turning to these we fill in some of what led to them,
referring to the companion paper \cite{AK25}
for a more thorough account of this background.
Some basics are recalled at the end of this section.

Following a standard abuse we identify a partially ordered set (poset) $P$ with its ground set.
A \emph{linear extension} of an $n$-element $P$
may then be viewed as either
a linear ordering of the elements of $P$
respecting the poset relations, or as
an order-preserving bijection from $P$ to $ [n]:=\{1\dots n\}$.
We use $E(P)$ for the set of linear extensions of $P$, 
and $\prec$ or $f$
for a uniform member of $E(P)$,
according to whether we are thinking of an ordering or a bijection.
Of central interest here are the ``balancing constants,''
$\gd_{xy}:=\min\{\prr(x\prec y),\prr(y\prec x)\}$, 
$\gd_x:=\max_{y\neq x}\gd_{xy}$,
and
$\gd(P):=\max \gd_{xy}$ (the max over distinct $x,y\in P$).

The primary motivation for \cite{AK25} was a pair of old 
``balancing'' problems for linear extensions,
beginning with the following ``1/3-2/3 Conjecture."

\begin{conj}
\label{C1/3}
If the finite poset P is not a chain then 
$
\gd(P)\geq 1/3.
$
\end{conj}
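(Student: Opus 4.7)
The 1/3--2/3 Conjecture is famously open, so any plan must be strategic rather than complete. I would split on the size of $\pi(P) := \max_x \pi(x)$.

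In the regime where $\pi(P)$ is large, I would invoke the main results of the present paper. Theorems~\ref{pi-var} and~\ref{Twsig} together show that, conditional on the Kahn--Saks conjecture for bounded-width posets, one obtains a pair $(x,y)$ with $\gd_{xy}$ close to $1/2$, vastly stronger than $\gd(P) \geq 1/3$. The Kahn--Saks conjecture is itself open, but it is a well-studied conjecture with a substantial body of partial results, so this step effectively reduces the large-$\pi$ case of 1/3--2/3 to progress on Kahn--Saks rather than to anything fundamentally new. A more unconditional route would be to work directly with Theorem~\ref{pi-var} and try to convert large variance into the weaker conclusion $\gd_{xy} \geq 1/3$ (as opposed to $\gd_{xy} \approx 1/2$), which might be achievable with milder hypotheses than the full Kahn--Saks conclusion demands.

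In the complementary regime where $\pi(P)$ is bounded -- so every element is comparable to all but a bounded number of others, and $P$ is essentially a short list of near-chains -- I would fall back on the Alexandrov--Fenchel / log-concavity machinery. For any incomparable $x,y$, the sequence
\[
  h_k(x,y) := |\{f \in E(P) : f(y) - f(x) = k\}|
\]
is log-concave, via the Alexandrov--Fenchel inequality for mixed volumes of order polytopes. Log-concavity of $h_k$ forces $\gd_{xy}$ away from $0$, and optimizing over all incomparable pairs yields the Kahn--Linial bound $\gd(P) \geq 1/2 - 1/(2\sqrt{5}) \approx 0.2764$, still the best general lower bound known.

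The gap between $0.2764$ and $1/3$ is the main obstacle, and why the conjecture has resisted attack for over fifty years. Log-concavity of a single $h_k(x,y)$ cannot by itself reach $1/3$: the worst-case log-concave profile in the Kahn--Linial analysis is not actually realized by any poset, but ruling it out requires information beyond what Alexandrov--Fenchel alone provides. A natural direction is to combine Alexandrov--Fenchel equality conditions (or Hodge--Riemann-type refinements) with the structural constraints forced by the bounded-$\pi$ regime isolated in the first step, so that one only needs to analyse a very restricted class of posets. I expect essentially all the remaining difficulty to lie here.
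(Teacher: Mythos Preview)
The paper does not prove Conjecture~\ref{C1/3}; it is stated as the open 1/3--2/3 Conjecture, with the paper recording only that the best known general bound is $(5-\sqrt{5})/10$, due to Brightwell, Felsner and Trotter. You correctly acknowledge this, and what you submit is a strategic sketch rather than a proof, so there is no paper argument to compare against.

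A few points on the sketch itself. Your large-$\pi$ branch mis-states the logic of the paper's results: Theorem~\ref{Twsig} is unconditional but requires \emph{bounded width}, not merely large $\pi$ or large $\gs$; there is no such thing as ``the Kahn--Saks conjecture for bounded-width posets,'' since Conjecture~\ref{CKS} concerns $w(P)\to\infty$. Large $\pi(P)$ is compatible with both small and large width, so within your large-$\pi$ branch you must still split on width---the bounded-width subcase is handled unconditionally by Theorem~\ref{Cpi}, while the unbounded-width subcase is precisely the open Kahn--Saks Conjecture (this is exactly Theorem~\ref{TKSpi}). So the paper's machinery does not reduce the large-$\pi$ regime of 1/3--2/3 to anything easier than Kahn--Saks itself. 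In your bounded-$\pi$ branch, the log-concavity/Aleksandrov--Fenchel route you describe yields only the general bound (which you misattribute to Kahn--Linial; the value $(5-\sqrt{5})/10$ is due to Brightwell--Felsner--Trotter and comes from correlation inequalities rather than Aleksandrov--Fenchel alone), and bounding $\pi$ gives no evident leverage to close the gap to $1/3$. The sketch thus correctly surveys the landscape but does not propose a path that the present paper's results make more tractable.
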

\nin
\nin
This was proposed by Kislitsyn \cite{Kislitsyn}
and again by Fredman (circa 1975) and Linial \cite{Linial},
all motivated by questions about sorting.
From the algorithmic standpoint what's wanted is existence of
\emph{some} universal $\gd>0$ 
such that $\gd(P)\geq \gd$ whenever $P$ is not a chain.
The first such bound, with $\gd=3/11$, was given in 
\cite{Kahn-Saks},
and the current best value
is $(5-\sqrt{5})/10$, due to
Brightwell, Felsner and Trotter \cite{BFT25}.
(See also \cite{Linial}, \cite{Karzanov-Khachiyan} and \cite{Kahn-Linial}.)

While Conjecture~\ref{C1/3} is tight, 
it seems likely that one can usually do better, as, for example,
in the following natural guess from \cite{Kahn-Saks}, 
now usually called the \emph{Kahn-Saks Conjecture}.
Recall that the \emph{width}, $w(P)$, is the size of a largest antichain in $P$.

\begin{conj}
\label{CKS}
If $w(P) \ra \infty$, then
\beq{1/2}
\gd(P)\ra 1/2.
\enq
\end{conj}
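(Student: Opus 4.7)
The plan is to combine the paper's two main ingredients, Theorem \ref{pi-var} and Theorem \ref{Twsig}, with a reduction from large width to bounded width. Given a sequence of posets $P_n$ with $w(P_n) \to \infty$, any element $x$ of a maximum antichain of $P_n$ satisfies $\pi(x) \geq w(P_n) - 1$, so $\pi(P_n) \to \infty$. Theorem \ref{pi-var} then produces an element $y_n \in P_n$ whose position $f(y_n)$ under a uniform linear extension has $\Var(f(y_n))$ tending to infinity. This already gives half of what is needed: a single element exhibiting large positional spread.

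If along some subsequence $w(P_n)$ happens to be bounded, Theorem \ref{Twsig} immediately yields $\gd(P_n) \to 1/2$, so the essential case is unbounded width together with unbounded variance at some element. My strategy here would be to \emph{localize}: find a sub-poset $Q_n \ni y_n$ of bounded width such that, after conditioning on the restriction of $\prec$ to $P_n \setminus Q_n$, the induced uniform linear extension of $Q_n$ still places $y_n$ with large variance on an event of non-negligible probability. On that event, Theorem \ref{Twsig} applied to $Q_n$ would produce some $z_n \in Q_n$ with the pair $(y_n, z_n)$ balanced in the conditional distribution, and standard averaging over the conditioning would transfer the balance back to $P_n$, since every pair $(u,v) \in Q_n$ has $\Pr(u \prec v)$ equal to the expected conditional probability.

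The principal obstacle is the construction of such a $Q_n$. Natural candidates are small unions of chains taken from a Dilworth decomposition that contains $y_n$, or order-convex neighborhoods of $y_n$ inside $P_n$; but in either case, one must prove the chosen $Q_n$ inherits the large-variance property after the conditioning, rather than having the variance driven entirely by the relative order of elements outside $Q_n$. Showing that the spread of $f(y_n)$ concentrates in a \emph{bounded-width} local piece seems to require genuinely new structural input, and I expect this to be precisely the missing step, which would account for the continued open status of Conjecture \ref{CKS} in spite of the powerful reduction provided by Theorems \ref{pi-var} and \ref{Twsig}.
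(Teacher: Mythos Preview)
The statement you were asked to prove is Conjecture~\ref{CKS}, which the paper does \emph{not} prove; it is presented and remains an open problem. There is therefore no ``paper's own proof'' to compare against. You recognize this yourself in your final paragraph, correctly identifying the localization step---passing from a poset of large width to a bounded-width subposet that retains large variance at the chosen element after conditioning---as the genuine obstruction.

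Your outline up to that point is sound and tracks exactly what the paper's machinery provides: $w(P)\to\infty$ trivially gives $\pi(P)\to\infty$, Theorem~\ref{pi-var} then yields $\gs(P)\to\infty$, and Theorem~\ref{Twsig} would finish the job \emph{if} the width were bounded. But the paper uses these two theorems in the opposite direction: rather than proving Conjecture~\ref{CKS}, it shows (Theorem~\ref{TKSpi}) that Conjecture~\ref{CKS} \emph{implies} the ostensibly stronger Conjecture~\ref{maxC}. The argument is that the ``$\gs$ version'' of the conjecture follows from Conjecture~\ref{CKS} (large width handled by assumption, bounded width by Theorem~\ref{Twsig}), and then Theorem~\ref{pi-var} upgrades this to the $\pi$ version. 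Your proposed localization would, if it worked, collapse this chain of implications into an actual proof---but as you say, supplying that step is precisely the open content of the conjecture.
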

\nin
Here and throughout, assertions like the one in Conjecture~\ref{CKS} have
the natural meanings, e.g.\ in the present case, that for any positive $\eps$,
there is a $w_\eps$ such that $\gd(P)>1/2-\eps$ if $w(P)> w_\eps$.

Prior to \cite{AK25}
(which we will not review here), 
the best general result  on Conjecture~\ref{CKS}
was that of Koml\'os \cite{Komlos}, who showed that \eqref{1/2} holds if 
$\min(P)=\gO(n)$.  
This was based on a lovely probabilistic result that is again crucial for the parts of 
\cite{AK25} involving Conjecture~\ref{CKS}.
See also e.g.\ \cite{Korshunov}, \cite{CPP21} 
for special cases;
Friedman \cite{Fri93} (another antecedent of \cite{AK25}) for conditions implying 
$\gd(P)> 1/e-o(1)$; and
\cite{Chan-Pak} for a good recent account of this area.

Much of \cite{AK25} is concerned with understanding relations between a few
parameters associated with a poset $P$, of which, in addition to the width, we here need only:
the \emph{standard deviation,} 
\[
\mbox{$\gs(P)=\max_{x\in P}\gs(x),$}
\]
where $\gs(x) $ is the standard deviation of $f(x)$ (recall $f$
is uniform from $E(P)$ if we think of bijections),
and the \emph{range}, 
\[
\mbox{$\pi(P)=\max_{x\in P}\pi(x),$}
\]
where $\pi(x) = |\Pi(x)|$, with $\Pi(x) = \{y:y\not\sim x\}$.

As in \cite{AK25}, we use $\gc\leadsto\vt$, for poset parameters $\gc$, $\vt$,
to mean that $\vt$ tends to infinity if $\gc$ does.
For the parameters above, $w\leadsto \pi$ and  $\gs\leadsto \pi$ are trivial, as are the more
concrete $w(P)\leq \pi(P)+1$ and
\beq{gspi}
\pi(x)=\gO(\gs(x)).
\enq
On the other hand, $w\leadsto \gs$, in the stronger form
\beq{gsw}
\gs(P)=\gO(w(P)),
\enq
follows from a key result of \cite{AK25}.
(What's shown in \cite{AK25} is that for another parameter, ``win,''
any antichain $A$ satisfies $\sum_{x\in A}\win(x)=\gO(|A|^2)$;
which, since $\gs(x)=\gO(\win(x))$ is easy, implies a quantified and
``\emph{local}'' version of \eqref{gsw}:
any antichain $A$ contains $x$ with $\gs(x) =\gO(|A|)$.)


Neither $w\leadsto \gs$ nor \eqref{gspi} is reversible;
e.g.\ we can
have $w(P)=2$ and $\gs(P)= \gO(n)$ 
(let $P$ be a chain plus an isolated point), or $\pi(x)=\gO(n)$ while both
$w$ and $\gs(x)$ are $O(1)$ 
(as for the disjoint union of two chains of equal size with $x$ minimal in one of them). 
The main result of the present work says that
a (qualitative) \emph{global} converse of \eqref{gspi} does hold:

\begin{thm}\label{pi-var} 
$\,\,\pi \leadsto \gs$.

\end{thm}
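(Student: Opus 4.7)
My plan is to prove the contrapositive: assume $\gs(P)\le K$ and show $\pi(P)\le g(K)$ for some function $g$. Working from uniform control on every $\gs(y)$ is much easier than trying to produce a single large-variance element directly.

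Two tools drive the argument. First, from \eqref{gsw} we have $w(P)=O(K)$, so by Dilworth $P$ decomposes into $O(K)$ chains. Second, Shepp's XYZ inequality implies that for every fixed $y\in P$ the events $\{z\prec y\}_{z\in\Pi(y)}$ are pairwise positively correlated: apply XYZ to the up-events $\{y\prec z\},\{y\prec z'\}$ and take complements. Writing $f(y)=1+|\{z:z<y\}|+\sum_{z\in\Pi(y)}\mbone[z\prec y]$, positive covariances give
\[
\gs(y)^2 \;\ge\; \sum_{z\in\Pi(y)}p_{zy}(1-p_{zy}) \;\ge\; \tfrac{2}{9}|M_y|,
\]
where $p_{zy}=\pr(z\prec y)$ and $M_y=\{z\in\Pi(y):p_{zy}\in[1/3,2/3]\}$; so $|M_y|\le \tfrac{9}{2}K^2$ uniformly in $y$.

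Fix $x$ with $\pi(x)=N$ and partition $\Pi(x)=M_x\sqcup A\sqcup B$, with $A=\{y:p_{yx}<1/3\}$ and $B=\{y:p_{yx}>2/3\}$. Since $|M_x|=O(K^2)$, WLOG $|A|\ge (N-\tfrac{9}{2}K^2)/2$. As $A\sub P$ has width $O(K)$, Dilworth inside $A$ produces a chain $y_1<\cdots<y_L$ with $L=\gO(N/K)$, all of whose members are incomparable to $x$ and typically above $x$; the probabilities $p_i:=p_{y_ix}$ are monotone non-increasing in $i$ because $\{y_{i+1}\prec x\}\sub\{y_i\prec x\}$.

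The crux --- and the step I expect to be the main obstacle --- is to turn this long $A$-chain into a single $y^\star\in P$ with $\gs(y^\star)>K$, giving a contradiction when $L$ is large. I would take $y^\star$ to be a middle element of the chain and apply the XYZ bound once more at $y^\star$, reducing the task to exhibiting $\omega(K^2)$ many $z\in\Pi(y^\star)$ with $p_{zy^\star}\in[1/3,2/3]$. The plan is to combine (i) the bounded-width chain decomposition, which forces $y^\star$ to be incomparable to many elements in chains other than its own, with (ii) the Koml\'os-type probabilistic input flagged in the introduction, used to convert the length $L$ --- which forces the mean positions $E[f(y_i)]$ to sweep a window of width $\gO(L)$ --- into a quantitative spread of positions of elements incomparable to $y^\star$. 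Once $|M_{y^\star}|$ exceeds $\tfrac{9}{2}K^2$ (which must happen once $N$, and hence $L$, is too large), we contradict the uniform bound and obtain $N\le g(K)$.
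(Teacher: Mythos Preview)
Your setup through step 7 is correct, and the XYZ observation giving $\gs(y)^2\ge\sum_{z\in\Pi(y)}p_{zy}(1-p_{zy})$ is a nice way to bound $|M_y|$ uniformly. But step 8 --- which you yourself flag as the ``main obstacle'' --- is not a proof, and the specific plan breaks down.

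Claim (i), that bounded width ``forces $y^\star$ to be incomparable to many elements in chains other than its own,'' does not follow. Consider $P=\{x\}+Y$ with $Y$ a chain of length $n$: here $y^\star=y_{L/2}$ has $\Pi(y^\star)=\{x\}$, so $|M_{y^\star}|\le 1$. In that example the contrapositive hypothesis fails (since $\gs(x)=\Theta(n)$), but this is exactly the point: the element with large variance is $x$, not the middle of the long chain, so targeting $y^\star$ is the wrong move. Nothing in your argument rules out, under the hypothesis $\gs\le K$ everywhere, that $\pi(y^\star)$ is tiny --- and since $M_{y^\star}\subseteq\Pi(y^\star)$, you cannot get $|M_{y^\star}|>\tfrac{9}{2}K^2$ without first controlling $\pi(y^\star)$, which is circular. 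Claim (ii) is likewise unjustified: the sweep of $\E[f(y_i)]$ over a window of width $\Omega(L)$ says nothing about elements \emph{incomparable to} $y^\star$ (the $y_i$'s themselves are all comparable to $y^\star$), and the appeal to ``Koml\'os-type input'' is too vague --- Koml\'os's result concerns balancing constants when $|\min(P)|=\Omega(n)$, not variance bounds of the sort you need.

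The paper's proof is quite different and does not try to name a single large-variance element in advance. It introduces the notion of a near-maximum incomparable pair $(A,B)$ and proves that the \emph{average} of $\gs^2(x)$ over $x\in A$ is large (Theorem~\ref{pi-var-gen}); this is then reduced, by conditioning on the restrictions of $\prec$ to four chains and invoking the law of total variance, to the width-$2$ unidirectional case (Theorem~\ref{main-lemma}), whose proof is a delicate analysis using Stanley's log-concavity (via Corollary~\ref{Cgsq}) and the Graham--Yao--Yao monotonicity inequality. Averaging over $A$, rather than committing to one candidate like $y^\star$, is what makes the argument go through.
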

One motivating idea for \cite{AK25} was that each of the 
the parameters discussed there could
replace width in Conjecture~\ref{CKS}, in 
the sense that \eqref{1/2} should hold if any one of them tends to infinity.
The strongest of these possibilities
is an old conjecture of the second author:
\begin{conj}\label{maxC}
If $~\pi(P)\ra\infty$ then \eqref{1/2} holds.
\end{conj}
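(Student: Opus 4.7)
The plan is to derive Conjecture~\ref{maxC} by assembling the Kahn-Saks Conjecture~\ref{CKS} (as hypothesis) with the two main theorems of the paper. So suppose we are given a sequence of posets with $\pi(P)\to\infty$, and we want to conclude $\gd(P)\to 1/2$.

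First, I would convert the range hypothesis into a variance hypothesis by invoking Theorem~\ref{pi-var} ($\pi\leadsto\gs$): from $\pi(P)\to\infty$ we immediately obtain $\gs(P)\to\infty$. This reduces the problem to showing that large $\gs$ forces $\gd\to 1/2$.

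Next, I would split on the width $w(P)$. Along any subsequence where $w(P)\to\infty$, Conjecture~\ref{CKS} gives $\gd(P)\to 1/2$ directly. Along any complementary subsequence, $w(P)=O(1)$ while $\gs(P)\to\infty$; here Theorem~\ref{Twsig} — which says exactly that the Kahn-Saks conclusion holds for posets of large $\gs$ and bounded width — again yields $\gd(P)\to 1/2$. Since every subsequence falls into one of these two regimes, the full sequence satisfies $\gd(P)\to 1/2$.

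The main obstacle is not this assembly, which is a short bookkeeping deduction once the tools are in hand, but the two inputs themselves. Theorem~\ref{pi-var} is subtle precisely because $\pi$ and $\gs$ are not \emph{locally} related — as the excerpt observes, one can have $\pi(x)=\Omega(n)$ while $\gs(x)=O(1)$ and $w=O(1)$ (two equal disjoint chains with $x$ minimal in one) — so the argument must manufacture an element of large variance \emph{somewhere} in $P$ from only the global existence of a large $\Pi(x)$. Theorem~\ref{Twsig} is the other technical heart: in the bounded-width regime one cannot invoke any large antichain, and the only resource is the single-element statistic $\gs(x)$, which must be turned into the existence of a balanced \emph{pair} $x,y$; this step is presumably the most delicate, since the natural Kahn-Saks-style arguments rely heavily on width.
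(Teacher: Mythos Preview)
Your argument is correct as a proof of the \emph{implication} Conjecture~\ref{CKS} $\Rightarrow$ Conjecture~\ref{maxC}, and in that role it is essentially the same as the paper's derivation of Theorem~\ref{TKSpi}: the paper also combines Theorem~\ref{pi-var} with a width dichotomy (large width handled by Conjecture~\ref{CKS}, bounded width by Theorem~\ref{Twsig}).

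The gap is that this is not a proof of Conjecture~\ref{maxC} itself. You explicitly take Conjecture~\ref{CKS} as a hypothesis, but Conjecture~\ref{CKS} is open; so what you have established is Theorem~\ref{TKSpi}, not the conjecture. The paper does not prove Conjecture~\ref{maxC} either---it remains open---and the assembly you describe is precisely how the paper shows the two conjectures are equivalent. If the intent was to prove the conjecture unconditionally, the missing ingredient is exactly the large-width case, i.e.\ Conjecture~\ref{CKS}, which neither you nor the paper supplies.
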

\nin
Since $w\leadsto \gs\leadsto \pi$,
Conjecture~\ref{maxC} implies the same statement with $\gs$ in place of $\pi$, which in turn implies
Conjecture~\ref{CKS}; similarly Conjecture~\ref{maxC} implies the other 
possibilities from \cite{AK25} alluded to above.

Theorem~\ref{pi-var} and
Conjecture~\ref{maxC} were shown 
in \cite{Aires}
for $P$ of width 2, and we will show the former here for any fixed width
(see Theorem~\ref{Cpi}).
The present argument is different from, and considerably
easier than, that of \cite{Aires}, though still not very easy.
The main points are Theorem~\ref{pi-var} (this is the hard part) and
the next statement, which is our second main result.

\begin{thm}\label{Twsig}
For fixed $w$, $P$ of width at most $w$, and $ x\in P$,
\[
\mbox{$\gd_x\ra 1/2$ as $\gs(x)\ra\infty$.}
\]  
In particular, \eqref{1/2} holds if $\gs(P)\ra\infty$.
\end{thm}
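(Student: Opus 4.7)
The plan is to reduce via Dilworth's chain decomposition to a one-dimensional anti-concentration problem, then conclude via a log-concavity input for the ``cut-position'' random variable on a Dilworth chain.  First, apply Dilworth's theorem to write $P=C_1\sqcup\cdots\sqcup C_w$ with $x\in C_1$.  For $j\ge 2$, let $G_j=|\{z\in C_j:z\prec x\}|$; then (up to the constant $G_1$, the position of $x$ within $C_1$) one has $f(x)=\sum_{j\ge 2}G_j$.  Cauchy--Schwarz on the covariance matrix gives $\gs(x)^2\le(w-1)\sum_{j\ge 2}\Var(G_j)$, so some chain $C_{j^*}$ satisfies $\Var(G_{j^*})\ge \gs(x)^2/(w-1)^2$, which blows up as $\gs(x)\to\infty$.

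Writing $C_{j^*}=\{z_1<\cdots<z_L\}$ and $p_k=\pr(z_k\prec x)=\pr(G_{j^*}\ge k)$, the sequence $(p_k)_{k=0}^{L+1}$ is non-increasing from $1$ to $0$.  Finding $k$ with $p_k$ close to $1/2$ gives $\gd_{x,z_k}$, hence $\gd_x$, close to $1/2$.  Such a $k$ exists as soon as we can force $\max_k\pr(G_{j^*}=k)=\max_k(p_k-p_{k+1})\to 0$ as $\gs(x)\to\infty$.

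The key technical input is log-concavity of the pmf $N_k=\pr(G_{j^*}=k)$: small-poset checks suggest this should hold whenever $C_{j^*}$ is a full chain in a Dilworth decomposition (log-concavity can fail for arbitrary sub-chains, e.g.\ $P=\{u,z_1,z_2,x\}$ with $u<z_1<z_2$ and $x$ incomparable to everything, taking the sub-chain $\{z_1,z_2\}$, where $(N_0,N_1,N_2)=(2,1,1)$).  Given log-concavity, a standard fact yields $\max_k \pr(G_{j^*}=k)=O(1/\sqrt{\Var(G_{j^*})})=O(w/\gs(x))$, and interpolation along $(p_k)$ produces some $p_{k^*}\in[1/2-O(w/\gs(x)),\,1/2+O(w/\gs(x))]$, giving $\gd_x\ge 1/2-O(w/\gs(x))\to 1/2$.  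The ``in particular'' clause is then immediate: $\gs(P)\to\infty$ yields an $x$ with $\gs(x)\to\infty$, so $\gd(P)\ge\gd_x\to 1/2$.

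The main obstacle is establishing log-concavity of $(N_k)$.  The natural route is a Stanley-style Alexandrov--Fenchel argument: one would like to realize $(N_k)$ as a mixed-volume sequence attached to appropriate polytopes built from the order polytope of $P$, exploiting that $C_{j^*}$ is an entire chain in a Dilworth decomposition (not merely a sub-chain), and conclude log-concavity from the AF inequalities.  If the direct AF route balks, a fallback would be to bound $\max_k\pr(G_{j^*}=k)$ more crudely via a coupling between cuts at positions $k$ and $k\pm 1$, using the bounded-width structure to absorb exceptional configurations and to control the resulting exchange.
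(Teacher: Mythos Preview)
Your approach diverges from the paper's, and the gap you flag---log-concavity of $(N_k)$---is fatal: the claim is false. Take $P=\{x,u,z_1,z_2,z_3\}$ with $z_1<z_2<z_3$, $u<z_2$, and $x$ incomparable to everything. Then $w(P)=3$, and $C_1=\{x\}$, $C_2=\{z_1,z_2,z_3\}$, $C_3=\{u\}$ is a valid Dilworth decomposition. Enumerating the ten linear extensions gives $(N_0,N_1,N_2,N_3)=(3,3,2,2)$, so $N_2^2=4<6=N_1N_3$. Thus ``full Dilworth chain'' does not rescue log-concavity once $w\ge 3$; it works for $w=2$ only because there $G_{j^*}=f(x)-\mathrm{const}$ and Stanley applies directly. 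An Alexandrov--Fenchel argument cannot establish a false inequality, and your coupling fallback is too unspecified to evaluate. Without log-concavity (or a substitute), large $\Var(G_{j^*})$ need not force $\max_kN_k\to 0$, so the scheme stalls at exactly the point you identified.

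For comparison, the paper avoids this difficulty entirely. It first reformulates via $\gs(x)=\Theta(1/q(x))$ (a consequence of Stanley's log-concavity of $\pr(f(x)=k)$), where $q(x)=\max_k\pr(f(x)=k)$, so the task becomes: $q(x)\to 0$ implies $\gd_x\to 1/2$. Assuming $\gd_x\le 1/2-\eps$, set $D=\{y:\pr(x\succ y)\ge 1/2+\eps\}$ and $U=P\sm(D\cup\{x\})$; then $A:=\max(D)$ and $B:=\min(U)$ have size at most $w$, and two applications of Shepp's XYZ inequality yield $\pr(A\prec x\prec B)\ge(2\eps)^{w^2}$. Since this probability equals $\pr(f(x)=|D|+1)\le q(x)$, one gets $q(x)\ge(2\eps)^{w^2}$; contrapositively, $q(x)\to 0$ forces $\eps\to 0$. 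The argument is short and needs no new log-concavity statement.
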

\nin
In view of Theorem~\ref{pi-var} this gives the above-mentioned 
bounded width version of Conjecture~\ref{maxC}.
\begin{thm}\label{Cpi}
For fixed $w$ and $P$ of width at most $w$, 
if $\pi(P)\ra\infty$ then \eqref{1/2} holds.
\end{thm}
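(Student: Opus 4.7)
The plan is to chain together the two main results. Fix the width bound $w$. Theorem~\ref{Twsig} states that among posets of width at most $w$, the conclusion $\gd(P)\ra 1/2$ follows from $\gs(P)\ra\infty$. Theorem~\ref{pi-var}, on the other hand, says $\pi\leadsto\gs$ with no width restriction. Composing the two implications, $\pi(P)\ra\infty$ first forces $\gs(P)\ra\infty$ (by Theorem~\ref{pi-var}), which in the fixed-width class then forces $\gd(P)\ra 1/2$ (by Theorem~\ref{Twsig}). This is exactly \eqref{1/2}.

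Quantitatively, given $\eps>0$, I would first extract from Theorem~\ref{Twsig} a threshold $\gs_\eps$ such that width-$\leq w$ posets with $\gs(P)>\gs_\eps$ satisfy $\gd(P)>1/2-\eps$, and then from Theorem~\ref{pi-var} a threshold $\pi_\eps$ such that $\pi(P)>\pi_\eps$ implies $\gs(P)>\gs_\eps$; the value $w_\eps:=\pi_\eps$ then witnesses the required convergence in the sense used in Conjecture~\ref{CKS}.

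There is no real obstacle at this step: the entire difficulty sits inside Theorems~\ref{pi-var} and~\ref{Twsig}, the former being flagged in the excerpt as ``the hard part.'' What makes the corollary clean is precisely that $\leadsto$ composes transitively, and that the width hypothesis is only needed on the $\gs$-to-$\gd$ half of the chain, since Theorem~\ref{pi-var} itself is width-free.
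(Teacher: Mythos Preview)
Your proposal is correct and matches the paper's own reasoning exactly: the paper derives Theorem~\ref{Cpi} simply by noting that Theorem~\ref{pi-var} gives $\pi\leadsto\gs$, and then Theorem~\ref{Twsig} (for fixed $w$) turns $\gs(P)\ra\infty$ into $\gd(P)\ra 1/2$. There is nothing to add.
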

\nin
Theorems~\ref{pi-var} and \ref{Twsig} also yield
the surprising fact
that Conjecture~\ref{CKS} is equivalent to the
seemingly less plausible (since its hypothesis is so weak) Conjecture~\ref{maxC}:

\begin{thm}\label{TKSpi}
Conjecture~\ref{CKS} implies Conjecture~\ref{maxC}.
\end{thm}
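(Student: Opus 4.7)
The plan is to argue by contradiction, leveraging a simple dichotomy on the widths of a ``bad'' sequence. Assume Conjecture~\ref{CKS} holds but Conjecture~\ref{maxC} fails; then there exist $\eps>0$ and a sequence of posets $(P_n)$ with $\pi(P_n)\to\infty$ and $\gd(P_n)\leq 1/2-\eps$ for all $n$. I would then split according to whether the widths $w(P_n)$ are bounded or not, extracting a subsequence in each case.

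First, in the case where $\{w(P_n)\}$ is unbounded, I would pass to a subsequence along which $w(P_n)\to\infty$. Conjecture~\ref{CKS}, which we are assuming, then forces $\gd(P_n)\to 1/2$, contradicting $\gd(P_n)\leq 1/2-\eps$.

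Second, in the case where $\{w(P_n)\}$ is uniformly bounded by some fixed $w$, I would first invoke Theorem~\ref{pi-var} ($\pi\leadsto\gs$) to conclude $\gs(P_n)\to\infty$, and then apply the ``in particular" clause of Theorem~\ref{Twsig} --- legitimately, since $w$ is fixed --- to obtain $\gd(P_n)\to 1/2$, again contradicting $\gd(P_n)\leq 1/2-\eps$.

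There is essentially no obstacle beyond organizing this dichotomy cleanly: the entire substance of the implication is already packaged inside Theorems~\ref{pi-var} and~\ref{Twsig}. Indeed, the content of Theorem~\ref{TKSpi} is just that the Kahn--Saks conjecture, together with the two structural facts that large $\pi$ forces large $\gs$ and that in bounded width large $\gs$ forces balancing, covers all the cases of Conjecture~\ref{maxC}. The only (mild) care needed is that Theorem~\ref{Twsig} is stated ``for fixed $w$,'' so one must first restrict to the bounded-width subsequence before invoking it.
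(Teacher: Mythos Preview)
Your proof is correct and is essentially the same as the paper's: both arguments split on whether the width is large (where Conjecture~\ref{CKS} applies directly) or bounded (where Theorem~\ref{pi-var} gives large $\gs$ and then Theorem~\ref{Twsig} gives balancing). The paper phrases this directly in terms of $\eps$-thresholds rather than via a contradiction/subsequence argument, but the content is identical.
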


\nin
To see this, just note that Conjecture~\ref{CKS} implies its ``$\gs$ version'' 
(which according to Theorem~\ref{pi-var} implies Conjecture~\ref{maxC}), 
since Theorem~\ref{Twsig} reduces proving the $\gs$ version to 
proving it for large width.
(Precisely:  for any $\eps>0$, Conjecture~\ref{CKS} gives $\gd(P)> 1/2-\eps$ if
$w(P)>w_\eps$, and Theorem~\ref{Twsig} gives the same conclusion for $P$ with $w(P)\le w_\eps$
and $\gs(P)$ sufficiently large.)

Finally, we have the following consequence of Theorem~\ref{Cpi}, 
which greatly extends the motivating result of \cite{CPP21}.
(We use $\mbN$ for the set of positive integers, endowed with the product order.
An \emph{ideal} 
of $P$ is $I\sub P $ satisfying $x<y\in I\Ra x\in I$, and $Q\sub P$ is \emph{convex}
if $[\mbox{$x,z\in Q$ and $x<y<Z$}]$ $\Ra$ $y\in Q$.)

\begin{thm}\label{TYd}
For fixed $d$ and $w$, and $P$ ranging over convex subsets 
of $\NN^d$ with $w(P)\in \{2\dots w\}$,
\[
\mbox{$\gd(P)\ra 1/2$ as $|P|\ra\infty$.}
\]  
\end{thm}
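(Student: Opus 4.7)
The plan is to derive Theorem~\ref{TYd} directly from Theorem~\ref{Cpi}. Since $w$ is fixed, Theorem~\ref{Cpi} gives $\gd(P)\ra 1/2$ whenever $\pi(P)\ra\infty$, so it suffices to prove that for fixed $d,w$, every convex $P\sub\NN^d$ with $w(P)\in\{2\dots w\}$ and $|P|\ra\infty$ has $\pi(P)\ra\infty$.

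The starting point is a structural fact about convex subsets of $\NN^d$: $|\min(P)|,|\max(P)|\le w(P)\le w$, and for each $m\in\min(P)$, $M\in\max(P)$ with $m\le M$ coordinatewise in $\NN^d$, the full lattice interval $[m,M]:=\{y\in\NN^d: m\le y\le M\}$ lies in $P$, since any $y$ with $m<y<M$ is strictly between two points of $P$ and so belongs to $P$ by convexity. Hence $P$ is a union of at most $w^2$ such ``boxes,'' and pigeonhole supplies some box $B=[m,M]\sub P$ with $|B|\ge|P|/w^2\ra\infty$.

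First I would handle the \emph{fat} case, in which $B$ has extent $\ge 2$ in at least two coordinate directions, say $1$ and $2$. Picking the corner $x=(m_1,M_2,m_3\dots m_d)\in B$, every $y\in B$ with $y_1>m_1$ and $y_2<M_2$ is incomparable to $x$; a direct count gives at least $|B|/4$ such $y$, so $\pi(x)\ge |P|/(4w^2)\ra\infty$, as desired.

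The main obstacle is the \emph{thin} case, in which every box in the decomposition is a coordinate-axis-aligned chain, so $P$ is a union of at most $w^2$ such chains and the longest, $C$, has $|C|\ge|P|/w^2\ra\infty$. Since $w(P)\ge 2$ some $z\in P\sm C$ exists. Writing $C$ in direction $1$, the plan is to split on the projection of $z$ onto coordinates $\ge 2$: if some coordinate of $z$ is $>$ \emph{and} another $<$ the corresponding coordinate of $m$, then $z$ is incomparable to \emph{every} element of $C$, giving $\pi(z)\ge|C|$; otherwise $z$ lies uniformly above or below the line of $C$ in those coordinates, and the $c\in C$ incomparable to $z$ form a prefix or suffix of $C$ whose length is controlled by $z$'s first coordinate. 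An aggregate/pigeonhole argument over $z\in P\sm C$, together with the extremality constraints $z\not<m$, $z\not>M$, should yield some element of $P$ with $\pi\ra\infty$; making this quantitative in every configuration is the technical crux, and will likely require an induction on $w$ applied to $P\sm C$, which has strictly smaller width in precisely the ``concentrated'' configurations where the direct argument does not already deliver $\pi(P)\gtrsim|C|$.
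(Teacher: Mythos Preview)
Your reduction to $\pi(P)\to\infty$ is correct, the box decomposition is valid, and the fat case is fine. The thin case, however, is left unfinished: you flag it as ``the technical crux'' and propose an induction on $w$, but give no argument. In fact no induction is needed. Once every decomposition box is a one-dimensional chain and $C=\{(t,c):m_1\le t\le M_1\}$ is the longest one, take any $z\in P\sm C$ with (say) $z_k\ge c_k$ for all $k\neq 1$ and $z_j>c_j$ for some $j\neq 1$. Then $z_1\le m_1$: if $z_1>m_1$ one has $z\ge m$, and any $M'\in\max(P)$ with $M'\ge z$ yields a decomposition box $[m,M']$ with extent $\ge 2$ in both directions $1$ and $j$, contradicting thinness. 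With $z_1\le m_1$ one gets $z\not\sim(t,c)$ for every $t>m_1$, so $\pi(z)\ge|C|-1\ge|P|/w^2-1$. The remaining cases (the ``mixed'' case, and the case $z_k\le c_k$ for all $k\neq 1$) are easier or symmetric. So your route can be completed, but not by the mechanism you suggest.

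The paper bypasses the fat/thin dichotomy entirely. Its Theorem~\ref{down}(a) proves $\pi(P)\ge|P|/2-1$ for \emph{every} convex $P\sub\NN^d$ that is not a chain: pick any coordinate $i$ in which $P$ varies, let $x$ be minimal among elements with maximal $i$-th coordinate and $y$ maximal among those with minimal $i$-th coordinate, and check (using convexity and the extremal choices of $x,y$) that at most two elements of $P$ are comparable to both; hence $\pi(x)+\pi(y)\ge|P|-2$. This is a few lines, requires no case analysis, and gives a bound independent of $w$ and $d$, whereas your approach loses a factor of $w^2$.
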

\nin
For $d=2$, an ideal of $\mbN^d$ is a \emph{Young diagram}
and a convex $P\sub \mbN^d$ is a \emph{skew Young diagram};
for these cases
Theorem~\ref{TYd} was shown by Chan, Pak and Panova~\cite{CPP21}.
(Strictly speaking, they assume a bound on the number of rows of the diagram, which is 
a little stronger than bounded width.)

That Theorem~\ref{Cpi} implies Theorem~\ref{TYd} follows from the next observation.

\begin{thm}\label{down} For all $d>0$:

\nin
{\rm (a)}
any convex $P\sub  \NN^d$ that is not a chain has
$
\pi(P)\geq |P|/2-1.
$

\nin
{\rm (b)}
 any ideal $P$ of $ \NN^d$ 
not contained in a coordinate hyperplane has
$   
\pi(P)> (1-1/d)|P|+O_d(1).
$   
\end{thm}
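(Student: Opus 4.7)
The plan is to exhibit in each $P$ an element or incomparable pair of elements with few comparabilities. The key technical ingredient in both parts is the lattice structure of $\mbN^d$: for incomparable $a,b \in P$, any $x \in P$ comparable to both must satisfy $x \le a\wedge b$ or $x \ge a\vee b$, since the ``mixed'' cases $\{x \le a,\, x \ge b\}$ and $\{x \ge a,\, x \le b\}$ would force $a\sim b$.

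For (a), given an incomparable pair $a,b \in P$, partition $P\sm\{a,b\}$ into $R$ (elements comparable to both), $S^*_a, S^*_b$ (comparable to exactly one), and $I^*$ (incomparable to both). A short count gives $\pi(a)+\pi(b) = |P| - |R| + |I^*|$, so $\max(\pi(a),\pi(b)) \ge |P|/2 - 1$ as soon as one can find an incomparable pair with $|I^*| \ge |R|-2$. By the lattice observation above, $R = R_-\cup R_+$ with $R_- = P \cap \{x \le a\wedge b\}\sm\{a,b\}$ and $R_+ = P \cap \{x \ge a\vee b\}\sm\{a,b\}$. I would split on structure. If $P$ has a unique minimum $m$ and a unique maximum $M$, convexity forces $P = [m,M]$, a box in $\mbN^d$; since $P$ is not a chain, $M$ differs from $m$ in at least two coordinates, and partitioning these coordinates into two nonempty subsets yields \emph{antipodal corners} $a,b$ with $a\wedge b = m$ and $a\vee b = M$, so $|R| = |R_-|+|R_+| = 1+1 = 2$. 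Otherwise $P$ has, say, two incomparable minima $m_1,m_2$; take $a=m_1, b=m_2$, whence minimality gives $R_- = \emptyset$ and $|R| = |R_+| = |\{x\in P : x\ge m_1\vee m_2\}|$. The main obstacle is controlling $|R_+|$ when it is large; I would attack it either by replacing $m_2$ with a maximal element of $P$ incomparable to $m_1$ (which, when one exists, forces $|R|=0$ outright), or by descending to the strictly smaller convex subposet $P\cap\{x\ge m_1\vee m_2\}$ (nonempty since $m_1\vee m_2 \in P$ by convexity, using any maximum $M\ge m_1,m_2$) and inducting on $|P|$.

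For (b), the sharper bound is matched by the extremal ``$d$-axis'' ideal, so the natural candidates are the axis-endpoints $x^{(i)} = K_i \mathbf{e}_i$, with $K_i = \max_{y\in P} y_i \ge 2$ (positive by the coordinate-hyperplane hypothesis and the ideal property). These satisfy $|D(x^{(i)})| = K_i$ and $|U(x^{(i)})| = |\{y\in P : y_i = K_i\}|$ (where $D(x) = P\cap\{y \le x\}$, $U(x) = P\cap\{y \ge x\}$), and averaging over $i$ yields the target bound provided $\sum_i K_i + \sum_{y\in P}|\{i : y_i = K_i\}| \le |P| + O_d(1)$. The main obstacle is that this sum can exceed $|P|$ for ``bulky'' ideals such as $[2]^d$, where many elements attain several coordinate maxima simultaneously. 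In that regime I would switch to a ``central'' element with roughly half of its coordinates at $1$ and half at the maximum, whose $|D|+|U|$ is $O(\sqrt{|P|})$ and beats the axis-endpoint bound by a wide margin. The full argument then picks whichever of these two choices gives the stronger incomparability count, with the $O_d(1)$ slack absorbing the discrepancies in the small-$|P|$ regime.
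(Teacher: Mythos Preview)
Your setup for (a) is sound: the identity $\pi(a)+\pi(b)=|P|-|R|+|I^*|$ and the lattice observation $R\subseteq\{z\le a\wedge b\}\cup\{z\ge a\vee b\}$ are exactly the right tools, and your box and min--max cases both yield $|R|\le 2$. The gap is the residual case (every maximum above every minimum, but $\ge 2$ minima): the induction on $P'=P\cap\{z\ge m_1\vee m_2\}$ does not lift back to $P$. Take $P=[(1,3),(k,k)]\cup[(3,1),(k,k)]\subseteq\mathbb N^2$ with $k\ge 4$. Here $m_1=(1,3)$, $m_2=(3,1)$, every element is $\ge m_1$ or $\ge m_2$ (so $I^*=\emptyset$), the unique maximum $(k,k)$ is above both minima, and $|R|=(k-2)^2$. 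Inducting into the box $P'=[(3,3),(k,k)]$ and taking antipodal corners $a'=(3,k)$, $b'=(k,3)$ gives $\pi_P(a')=\pi_P(b')=(k-3)(k-1)$, which for $k=4,5$ is strictly below $|P|/2-1=(k^2-6)/2$; so neither the original pair $(m_1,m_2)$ nor the inducted pair works.

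The paper sidesteps the case analysis with a single construction: pick any coordinate $i$ with $M_i:=\max_z z_i>\min_z z_i=:m_i$, let $x$ be minimal in $P$ with $x_i=M_i$ and $y$ maximal with $y_i=m_i$. Your lattice observation reduces the task to bounding $|\{z\in P:z\le x\wedge y\}|$ (and dually). The point is that for any such $z$, the element $z':=(M_i,z_{-i})$ satisfies $z\le z'\le x$, hence lies in $P$ by convexity, and then minimality of $x$ forces $z'=x$; so $z$ is uniquely determined. Thus $|R|\le 2$ always. In the example above this picks $x=(k,1)$ and $y=(1,k)$, which do the job.

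For (b) your axis-endpoint candidates $K_i e_i$ coincide with the paper's, but the paper does not average: it observes that if \emph{some} $K_i$ lies in $[d,n/d]$ then $\ov\pi(K_ie_i)\le K_i+n/K_i\le n/d+d$ already suffices. When no such $i$ exists (so each $K_i$ is either $<d$ or $>n/d$, and both types occur), the paper projects $P$ onto the ``short'' coordinates $S$, takes $y$ maximal in the image with minimal fiber size $f(y)$, and uses $x=(y,\underline 1)$; then $\ov\pi(x)=f(y)+|\{z<y\}|<n/d+d^d$ by a short averaging over fibers. Your central-element fallback handles the all-short (hence $|P|<d^d$) regime but is not clearly adequate when one coordinate is long and the rest short---there the obvious ``half at $1$, half at max'' choices have $\ov\pi\approx n$---so the projection idea is the missing ingredient.
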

\nin
(We just need $\pi(P)=\gO(|P|)$ but think it worth recording the more precise bounds.)

\mn
\textbf{Outline:}
Section~\ref{VandB} recalls fundamental results of Shepp and Stanley, and proves 
Theorem~\ref{Twsig}; 
Section~\ref{IPAV} reduces Theorem~\ref{pi-var} to a special case (Theorem~\ref{pi-var-gen}),
the proof of which, given in Section~\ref{SecVW2P}, was the most demanding
 part of the present work; and the brief Section~\ref{SecYd} proves Theorem~\ref{down}.

\mn
\textbf{Usage}

As above we use ``$<$'' for order in $P$ and ``$\prec$'' 
for order in a uniform extension, 
in each case adding a dot for \emph{cover} relations (e.g.\ $x\recovby y$
means $x\leq w\leq y$ $\Ra $ $w\in \{x,y\}$).
As usual $X<Y$ means $x<y$ $\forall (x,y)\in X\times Y$.
Elements $x,y$ of $P$ are \emph{comparable}, denoted $x\sim y$, if 
$x<y$ or $x>y$.

For $Y$ contained in (the ground set of) $P$, the \emph{subposet} $P[Y]$
is the poset with ground set $Y$ and order inherited from $P$.
We use $P=X\sqcup Y$ to mean the \emph{set} $P$
is the disjoint union of $X$ and $Y$, and $P=X+Y$ if, in addition, $X\not\sim Y$.
A \emph{chain} is a linearly ordered set and
an \emph{antichain} is a set with no relations $x<y$.
As above, the \emph{width}, $w(P)$, is the maximum size of an antichain.

The sets of maximal and minimal elements of $X\sub P$ are denoted $\max (X)$ and $\min (X)$.
An (\emph{order}) \emph{ideal} (nowadays also called a \emph{downset}) of $P$ is $I\sub P $ satisfying
$x<y\in I\Ra x\in I$, and \emph{filter} (\emph{upset}) is defined similarly.

As above, $E(P)$ is the set of linear extensions (we will sometimes say simply \emph{extensions})
of $P$.  We use either $\prec$
or $f$ for a uniform member of $E(P)$ (depending on viewpoint),
and $\pr$ and $\E$ for 
the associated probabilities and expectations.  (We will also, as necessary,
use $\pr_Q$ and $\E_Q$ with $Q$ an auxiliary poset.)

As usual $[n]=\{1\dots n\}$ and events are designated with $\{\cdots\}$.
We use standard asymptotic notation ($O(\cdot)$, $\gO(\cdot)$ and so on),
and usually  
pretend large numbers are integers, rather than clutter the discussion with
irrelevant floor and ceiling symbols.

\section{Variance and Balance}\label{VandB}

Before turning to Theorem~\ref{Twsig}, we recall a pair of basic results, the first of which is Shepp's
``XYZ Inequality'' \cite{She82}:
\begin{thm}\label{TXYZ}
For $x\in P$ and $Y\sub P- x$,
\[
\prr(x\succ Y)\geq \prod_{y\in Y}\prr(x\succ y).
\]
\end{thm}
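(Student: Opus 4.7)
The plan is to reduce the general statement to Shepp's original XYZ inequality (the $|Y|=2$ case, \cite{She82}) and take that as input. I would induct on $|Y|$, with $|Y|\le 1$ trivial.

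For the inductive step, fix $y_0\in Y$ and assume $x\not\sim y_0$ (otherwise $\prr(x\succ y_0)\in\{0,1\}$ and the corresponding factor is harmless). Let $P'$ be the poset obtained from $P$ by adjoining the relation $y_0< x$. Because a uniform extension of $P$ conditioned on $\{x\succ y_0\}$ is exactly a uniform extension of $P'$,
\[
\prr_P(x\succ Y- y_0\mid x\succ y_0)\;=\;\prr_{P'}(x\succ Y- y_0).
\]
The inductive hypothesis applied in $P'$ to the set $Y- y_0$ (of size $|Y|-1$) gives
\[
\prr_{P'}(x\succ Y- y_0)\;\ge\;\prod_{y\in Y- y_0}\prr_{P'}(x\succ y)\;=\;\prod_{y\in Y- y_0}\prr_P(x\succ y\mid x\succ y_0),
\]
while the $|Y|=2$ case (Shepp) gives $\prr_P(x\succ y\mid x\succ y_0)\ge \prr_P(x\succ y)$ for each such $y$. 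Multiplying through by $\prr_P(x\succ y_0)$ and using $\prr_P(x\succ Y)=\prr_P(x\succ y_0)\cdot \prr_P(x\succ Y- y_0\mid x\succ y_0)$ then delivers the claim.

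The remaining and genuinely difficult ingredient is the $|Y|=2$ base case, which I would cite from \cite{She82} but could reprove via Shepp's continuous route: sample independent uniforms $(U_v)_{v\in P}$ on $[0,1]$ and condition on $U_v<U_w$ whenever $v<w$ in $P$; the conditional law is uniform on a convex polytope $K\sub [0,1]^P$, hence log-concave. Further conditioning on $U_x=t$, FKG applied to the log-concave conditional measure on $K\cap\{u_x=t\}$ produces positive correlation of the decreasing events $\{U_y<t\}$ and $\{U_z<t\}$ at each $t$. Integrating over $t$ against the marginal density of $U_x$ then reduces the problem to showing that both $t\mapsto\prr(U_y<t\mid U_x=t)$ and $t\mapsto\prr(U_z<t\mid U_x=t)$ are monotone in $t$, after which Chebyshev's integral inequality closes the argument. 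This last monotonicity is a Brunn--Minkowski / Pr\'ekopa--Leindler computation and is the main technical obstacle.
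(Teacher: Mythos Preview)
Your induction from the $|Y|=2$ case is correct and is exactly the paper's approach (the paper simply says the $|Y|=2$ statement ``implies Theorem~\ref{TXYZ} via a straightforward induction'' and gives no further detail).

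One caveat on your supplementary sketch of the base case: log-concavity of a density does \emph{not} by itself yield FKG---uniform measure on a general convex body need not be positively associated---so ``hence log-concave $\Rightarrow$ FKG'' is a non sequitur. What makes FKG applicable to the order polytope (and to its slices $\{u_x=t\}$) is that the density is a product of indicators $\mathbf{1}[u_v<u_w]$, each of which satisfies the FKG lattice condition; this is the substance of Shepp's argument, which he runs on a discrete lattice rather than via the continuous picture you outline. Since you are citing \cite{She82} for the base case anyway, this does not affect the correctness of your proof.
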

\nin
(The statement in \cite{She82} is for $|Y|=2$, but implies Theorem~\ref{TXYZ} 
via a straightforward induction.)

The next result, due to Stanley \cite{Sta81},
is used here and again in the proof of Theorem~\ref{pi-var} (see Section~\ref{SecVW2P}).
\begin{thm}\label{Stanley} 
For any P and x, the sequence $\pr(f(x)=k)$ is log-concave.
\end{thm}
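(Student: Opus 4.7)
The plan is to realize $N(k):=|\{f\in E(P):f(x)=k\}|$ as a mixed volume of two convex bodies and then invoke the Alexandrov-Fenchel inequality; since $\pr(f(x)=k)=N(k)/|E(P)|$, log-concavity of the probabilities is equivalent to $N(k)^2\geq N(k-1)N(k+1)$ for all $k$.

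With $n=|P|$, consider the order polytope
\[
\Oh(P) = \{\phi:P\to [0,1] : \phi(u)\leq\phi(v) \text{ whenever } u\leq_P v\}.
\]
Its canonical triangulation subdivides $\Oh(P)$ into $|E(P)|$ unit simplices, one per linear extension. Slicing at $\phi(x)=t$ gives an $(n-1)$-dimensional polytope $S_t$; the simplex indexed by $f$ with $f(x)=k$ has $k-1$ coordinates constrained to $[0,t]$ and $n-k$ to $[t,1]$, so its slice at $\phi(x)=t$ has $(n-1)$-volume $\tfrac{t^{k-1}}{(k-1)!}\cdot\tfrac{(1-t)^{n-k}}{(n-k)!}$. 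Summing,
\[
\mathrm{vol}_{n-1}(S_t) = \frac{1}{(n-1)!}\sum_{k=1}^{n} N(k)\binom{n-1}{k-1}t^{k-1}(1-t)^{n-k}.
\]

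Next, identify $S_t$ with a Minkowski combination of two fixed convex bodies. The rescaling $\phi\mapsto\phi/t$ on coordinates with $\phi(y)\leq t$ and $\phi\mapsto(\phi-t)/(1-t)$ on those with $\phi(y)\geq t$ (partitioning and summing over the incomparable-to-$x$ elements accordingly) shows, after some bookkeeping, that $S_t$ is affinely isomorphic to $tK_0+(1-t)K_1$ for two $(n-1)$-dimensional polytopes $K_0,K_1$ built from the down- and up-sets of $x$ together with contributions from the incomparable elements. The standard expansion of the volume of a Minkowski combination in terms of mixed volumes,
\[
\mathrm{vol}_{n-1}(tK_0+(1-t)K_1) = \sum_{k=1}^{n}\binom{n-1}{k-1} t^{k-1}(1-t)^{n-k} V(K_0^{k-1},K_1^{n-k}),
\]
then matches the Bernstein expression above term by term, giving $N(k) = (n-1)!\, V(K_0^{k-1},K_1^{n-k})$, where as usual $K_0^{k-1}$ indicates $k-1$ repeated arguments.

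Finally, the Alexandrov-Fenchel inequality for mixed volumes of convex bodies in $\mbR^{n-1}$ yields
\[
V(K_0^{k-1},K_1^{n-k})^2 \geq V(K_0^{k-2},K_1^{n-k+1})\cdot V(K_0^{k},K_1^{n-k-1}),
\]
which is exactly $N(k)^2\geq N(k-1)N(k+1)$, completing the proof. The main obstacle is the Minkowski identification in the middle step: one has to define $K_0,K_1$ carefully so that the incomparable-to-$x$ elements are handled consistently across different values of $t$, and verify that the decomposition $S_t=tK_0+(1-t)K_1$ really holds (not just pointwise-per-partition but as an honest Minkowski sum of two fixed bodies). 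Alexandrov-Fenchel itself — a deep theorem of convex geometry — we use as a black box.
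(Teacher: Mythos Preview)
The paper does not prove this theorem; it is quoted as a result of Stanley, and your outline is essentially Stanley's original argument via Alexandrov--Fenchel.  The step you flag as the obstacle can be completed cleanly: take $K_0=S_1$ and $K_1=S_0$, the slices of the order polytope of $P$ at $\phi(x)=1$ and $\phi(x)=0$ respectively.  The inclusion $tS_1+(1-t)S_0\subseteq S_t$ is immediate since the defining constraints are linear.  For the reverse inclusion, given $\psi\in S_t$ set
\[
\phi_1(y)=\min\{\psi(y)/t,\,1\}, \qquad \phi_0(y)=\max\{(\psi(y)-t)/(1-t),\,0\};
\]
a direct check shows $\phi_1\in S_1$, $\phi_0\in S_0$, and $t\phi_1+(1-t)\phi_0=\psi$.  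So no per-partition bookkeeping for the elements incomparable to $x$ is needed---the decomposition is uniform in $t$.  (The bodies $S_0,S_1$ sit in proper affine subspaces, but Alexandrov--Fenchel holds for arbitrary convex bodies, or one may perturb.)
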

\nin
(That is,  $\pr(f(x)=k)^2\ge \pr(f(x)=k-1)\pr(f(x)=k+1)$ for all $k$.)

\nin
We use this with
the following standard observation
(for which we unfortunately don't know a reference).

\begin{prop}\label{Ldisclc}
Any 
integer-valued random variable $f$ with 
a log-concave distribution has
\[
\Var(f) =O((\max_k\pr(f(x)=k))^{-2}).
\]
\end{prop}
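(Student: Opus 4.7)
The plan is to reduce the bound to one on $\sum_k k^2 p_k$, where $p_k := \pr(f = k)$, and then exploit log-concavity to get exponential tail decay from the value $M := \max_k p_k$. I would fix a mode $k^*$ with $p_{k^*} = M$ and translate so $k^* = 0$; since the mean minimizes $\E[(f - c)^2]$ over $c \in \mathbb{R}$, one has $\Var(f) \leq \E[f^2] = \sum_k k^2 p_k$, so it suffices to bound this last sum by $O(M^{-2})$. Log-concavity implies that $p$ is unimodal with mode at $0$ and that the ratios $r_j := p_{j+1}/p_j$ (wherever defined) form a non-increasing sequence; since the analysis on $\{j < 0\}$ is entirely symmetric, I will focus on bounding $\sum_{j \geq 0} j^2 p_j$.

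The crux is a two-step tail estimate. First, let $J := \min\{j \geq 0 : p_j < M/2\}$; since $p_j \geq M/2$ for $j < J$ and $\sum_{j \geq 0} p_j \leq 1$, one has $JM/2 \leq 1$, so $J \leq 2/M$. Second, from $p_J/p_0 = r_0 r_1 \cdots r_{J-1} < 1/2$ and the fact that the non-increasing sequence $(r_j)$ has minimum $r_{J-1}$ on $\{0,\ldots,J-1\}$, one deduces $r_{J-1} \leq 2^{-1/J}$, and iterating gives $p_{J+i} \leq p_J \cdot 2^{-i/J} \leq (M/2)\, e^{-cMi}$ for an absolute $c > 0$ (using $1/J \geq M/2$).

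With this exponential tail in hand, splitting $\sum_{j \geq 0} j^2 p_j$ at $J$, the initial segment contributes at most $M J^3 = O(M^{-2})$ (since $p_j \leq M$), while the tail $(M/2) \sum_{i \geq 0} (J + i)^2 e^{-cMi}$ expands, via the standard moment estimates $\sum_{i \geq 0} i^k e^{-cMi} = O(M^{-k-1})$ for $k \in \{0, 1, 2\}$, into three terms each of order $O(M^{-2})$. Adding the symmetric contribution from $j < 0$ completes the argument.

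The one genuinely substantive step is the passage from the mild ``width $\geq 1/M$'' observation (which uses only $p_k \leq M$ and $\sum p_k = 1$) to actual geometric decay at rate $\gO(M)$, and this is precisely where log-concavity plays its essential role through the monotonicity of $(r_j)$. Everything else reduces to routine geometric-series bookkeeping.
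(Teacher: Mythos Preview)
Your argument is correct. The reduction to $\sum_k k^2 p_k$ via $\Var(f)\le\E[(f-k^*)^2]$, the width bound $J\le 2/M$, the extraction of geometric decay from $r_{J-1}^J\le p_J/p_0<1/2$ together with monotonicity of the ratios, and the final split into initial segment plus exponentially decaying tail all go through as you describe (with the understanding that once the support ends the tail terms vanish, which only helps).

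As for comparison: the paper does not give a proof of this proposition---it is stated as a ``standard observation (for which we unfortunately don't know a reference)'' and used as a black box. So there is nothing in the paper to compare your route against; you have simply supplied the missing argument.
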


With 
$      
q(x):= \max_k \pr(f(x)=k)$, we have the following important consequence.
\begin{cor}\label{Cgsq}
For any $P$ and $x$, $\gs(x)=\Theta(1/q(x))$.
\end{cor}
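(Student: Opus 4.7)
\textbf{Proof plan for Corollary~\ref{Cgsq}.} The claim splits into two independent inequalities, the upper bound $\sigma(x) = O(1/q(x))$ and the matching lower bound $\sigma(x) = \Omega(1/q(x))$, which have quite different characters, and my plan is to handle them separately.

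The upper bound is essentially handed to us. Stanley's Theorem~\ref{Stanley} guarantees that $k \mapsto \pr(f(x) = k)$ is log-concave, so Proposition~\ref{Ldisclc} applies directly and yields $\Var(f(x)) = O(q(x)^{-2})$; taking square roots gives $\sigma(x) = O(1/q(x))$.

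For the lower bound, log-concavity plays no role and the inequality is a one-line consequence of Chebyshev. Writing $\mu = \E[f(x)]$, Chebyshev's inequality places at least $3/4$ of the mass of $f(x)$ in the interval $[\mu - 2\sigma(x), \mu + 2\sigma(x)]$, which contains only $O(\sigma(x))$ integers; pigeonhole then supplies a value $k$ in that interval with $\pr(f(x) = k) = \Omega(1/\sigma(x))$, so $q(x) = \Omega(1/\sigma(x))$, which rearranges to $\sigma(x) = \Omega(1/q(x))$.

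Neither step looks like a genuine obstacle: the upper bound is exactly what Proposition~\ref{Ldisclc} is designed to deliver, and the lower bound is a textbook concentration argument that does not even use log-concavity. The only minor subtlety I foresee is the degenerate case in which $x$ is comparable to every other element of $P$, so that $\sigma(x) = 0$ while $q(x) = 1$; the $\Theta$ in the conclusion should accordingly be read modulo an additive $O(1)$, or equivalently in the asymptotic sense $\sigma(x) \to \infty \iff q(x) \to 0$ with matching rates, which is the form in which the corollary is invoked later in the paper.
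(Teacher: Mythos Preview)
Your proposal is correct and matches the paper's own proof: the upper bound via Theorem~\ref{Stanley} and Proposition~\ref{Ldisclc}, and the lower bound by an elementary argument not requiring log-concavity (the paper merely calls it ``obvious,'' and your Chebyshev-plus-pigeonhole is a perfectly good fleshing out of that). Your remark on the degenerate $\sigma(x)=0$ case is also apt; the $\Theta$ is indeed meant asymptotically, as the paper's applications confirm.
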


\nin
\emph{Proof.}
That $\gs(x)=\gO(1/q(x))$ is obvious (even without log-concavity), and $\gs(x)=O(1/q(x))$
is given by Theorem~\ref{Stanley} and Proposition~\ref{Ldisclc}.
\qed

In view of Corollary~\ref{Cgsq} we may rewrite Theorem~\ref{Twsig} as

\begin{thm}\label{Twsig'}
For fixed $w$, $P$ of width at most $w$, and $ x\in P$,
$
\mbox{$\gd_x\ra 1/2$ as $q(x)\ra 0$.}
$  
\end{thm}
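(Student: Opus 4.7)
The plan is to argue by contrapositive, using Corollary~\ref{Cgsq} (which says $\gs(x) = \Theta(1/q(x))$): it suffices to show that if $\gd_x \leq 1/2 - \eps$ for some $\eps > 0$, then $q(x) \geq c(\eps, w) > 0$ for some constant depending only on $\eps$ and $w$. Apply Dilworth's theorem to partition $P = C_1 \sqcup \cdots \sqcup C_w$ into chains with $x \in C_1$, so that every element of $C_1 \setminus \{x\}$ is comparable to $x$. For each $i \in \{2, \ldots, w\}$, enumerate $C_i \cap \Pi(x) = \{y_1^{(i)} < \cdots < y_{m_i}^{(i)}\}$ and set $Z_i := |\{y \in C_i \cap \Pi(x) : y \prec x\}|$. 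Then $f(x) = 1 + d(x) + \sum_{i=2}^w Z_i$ (with $d(x) := |\{y : y < x\}|$ a constant of $P$), so $\gs(x)^2 = \Var(\sum_i Z_i)$; and the sequence $a_k^{(i)} := \pr(x \prec y_k^{(i)})$ is non-decreasing in $k$ (with conventions $a_0^{(i)} := 0$, $a_{m_i+1}^{(i)} := 1$).

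The hypothesis $\gd_x \leq 1/2 - \eps$ translates to $a_k^{(i)} \notin (1/2 - \eps, 1/2 + \eps)$ for every $i, k$, so by monotonicity there is a unique ``jump'' index $k_i^*$ with $a_{k_i^*}^{(i)} \leq 1/2 - \eps \leq 1/2 + \eps \leq a_{k_i^*+1}^{(i)}$, whence
\[
\pr(Z_i = k_i^*) = a_{k_i^*+1}^{(i)} - a_{k_i^*}^{(i)} \geq 2\eps,
\]
so $\max_k \pr(Z_i = k) \geq 2\eps$ for each chain $i$. Invoking the log-concavity of the distribution of $Z_i$ (the main obstacle, discussed next), Proposition~\ref{Ldisclc} gives $\Var(Z_i) = O(\eps^{-2})$. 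The $L^2$ triangle inequality applied to $\sum_{i \geq 2} Z_i$ then yields
\[
\gs(x) = \sqrt{\Var\big(\textstyle\sum_{i \geq 2} Z_i\big)} \leq \sum_{i=2}^w \sqrt{\Var(Z_i)} = O(w/\eps),
\]
and Corollary~\ref{Cgsq} gives $q(x) \geq c(\eps, w) > 0$, as required.

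The main obstacle is establishing log-concavity of $k \mapsto \pr(Z_i = k)$. Stanley's Theorem~\ref{Stanley} is the special case in which $C_i \cap \Pi(x)$ accounts for all of $\Pi(x)$ (so that $Z_i$ equals $f(x) - 1 - d(x)$); the ``chain-relative'' version needed here---log-concavity of the number of predecessors of $x$ drawn from an arbitrary chain of $P$---should follow from the same Aleksandrov--Fenchel/order-polytope argument that underlies Stanley's original theorem, now applied to the slabs of $\cO(P)$ cut out by the hyperplanes $\omega(y_k^{(i)}) = \omega(x)$. Verifying this generalization carefully (or reducing it to the original Stanley by means of an auxiliary poset construction, perhaps using the XYZ inequality (Theorem~\ref{TXYZ}) to establish intermediate monotonicities) is where I expect the real technical work of the proof to lie.
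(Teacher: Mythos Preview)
Your overall contrapositive strategy matches the paper's, but the execution diverges at the key technical step, and your version has a genuine gap.

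You reduce everything to the claim that $k\mapsto \pr(Z_i=k)$ is log-concave, and you yourself flag this as ``where the real technical work lies.'' That is accurate: this ``chain-relative'' log-concavity is \emph{not} Stanley's theorem, and the order-polytope argument does not go through as stated, because the regions $\{\omega(y_k^{(i)})<\omega(x)<\omega(y_{k+1}^{(i)})\}$ are cut by non-parallel hyperplanes, so neither Brunn--Minkowski nor the Aleksandrov--Fenchel inequality applies to them in the way it does for Stanley's parallel slabs $\{\omega(x)\in[s,s+ds]\}$. Absent that log-concavity, knowing only $\max_k\pr(Z_i=k)\ge 2\eps$ gives no control on $\Var(Z_i)$, so the chain of inequalities leading to $\gs(x)=O(w/\eps)$ breaks.

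The paper sidesteps this entirely. It observes that the hypothesis $\gd_x\le 1/2-\eps$ partitions $P\sm\{x\}$ into an ideal $D=\{y:\pr(x\succ y)\ge 1/2+\eps\}$ and a filter $U=\{y:\pr(x\prec y)\ge 1/2+\eps\}$, with $A=\max(D)$ and $B=\min(U)$ of size at most $w$, and then uses two applications of the XYZ inequality (Theorem~\ref{TXYZ}) to show
\[
\pr(A\prec x\prec B)\;\ge\;(2\eps)^{w^2}.
\]
Since $\{A\prec x\prec B\}$ is the single event $\{f(x)=|D|+1\}$, this is exactly a lower bound on $q(x)$. In your language, the paper is directly lower-bounding $\pr\bigl(\bigcap_i\{Z_i=k_i^*\}\bigr)$ via positive correlation, rather than trying to control each $Z_i$ separately; XYZ supplies the needed correlation inequality with no appeal to any log-concavity beyond Stanley's. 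If your chain-relative log-concavity could be established, your route would yield the quantitatively stronger bound $q(x)=\gO(\eps/w)$ rather than $(2\eps)^{w^2}$, but as written it is incomplete.
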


\nin
\emph{Remark.}
When $w(P)=2$, Theorem~\ref{Twsig'} is easy:
Let $a=|\{y:y\le x\}|$ and $\Pi(x)=\{y_1<y_2<\cdots<y_r\}$.
Since each of $\prr(x\prec y_1)$ ($=\pr(f(x)=a)$ and $\prr(x\succ y_r)$ ($=\pr(f(x)=a+r)$)
is at most $q(x)$, we have
$
k:=\max\{i: \prr(x\succ y_i)>1/2\}\in [r-1],
$
and
\[
1/2 < \prr(x\succ y_k)= \prr(x\succ y_{k+1}) + \prr(f(x)=a+k) \leq 1/2+q(x).
\]

For general (fixed) $w$, the main point for the proof of Theorem~\ref{Twsig'}
 is the following statement, which we think is interesting in its own right.

\begin{thm}\label{Cwsig}
Suppose $P$ is the disjoint union of $\{x\}$, $D$ and $U$, with $D$ an ideal and $U$ a filter;
that $A:=\max(C)$ and $B:=\min(U)$ satisfy $|A|,|B|\leq w$; and that
\beq{paxb}
\mbox{$\prr(a\prec x\prec b) \geq \eps \,$ for all $\, a\in A $ and $b\in B$.}
\enq
Then $\prr(A\prec x\prec B) \geq \eps^{w^2}$.
\end{thm}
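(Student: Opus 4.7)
The plan is to apply Shepp's XYZ inequality (Theorem~\ref{TXYZ}) twice, each time in an augmented poset that enforces one side of the target event. The factor $w^2$ in the conclusion matches $|A|\cdot|B|$, which is exactly what a double application will produce.

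First, fix an arbitrary $b \in B$ and let $P^b$ be the poset obtained from $P$ by adding the relation $x < b$ (and taking transitive closure). Since $b \in \min(U)$ with $U$ a filter and $x\notin U$, one has $b \not\le x$ in $P$, so $P^b$ is acyclic, and its linear extensions are precisely those of $P$ satisfying $x \prec b$. Applying Theorem~\ref{TXYZ} in $P^b$ to $x$ and $Y = A$, then rewriting $\pr_{P^b}(\cdot)$ as $\pr_P(\cdot \mid x \prec b)$, should give
\[
\pr(A \prec x \prec b) \;\geq\; \prod_{a \in A} \pr(a \prec x \prec b) \cdot \pr(x \prec b)^{1-|A|} \;\geq\; \eps^{|A|},
\]
using the hypothesis $\pr(a \prec x \prec b) \geq \eps$ and the fact that $1 - |A| \leq 0$.

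Second, I would run the analogous argument on the other side. Let $P_A$ be the poset obtained from $P$ by adding all relations $\{a < x : a \in A\}$; since $A = \max(D)$ with $D$ an ideal not containing $x$, no $a \in A$ satisfies $a > x$ in $P$, so $P_A$ is a valid poset whose extensions are exactly the extensions of $P$ satisfying $A \prec x$. Applying XYZ in the dual of $P_A$ (which yields the reversed inequality $\pr(x \prec Y) \geq \prod_{y\in Y}\pr(x \prec y)$) with $Y = B$, and translating back, should give
\[
\pr(A \prec x \prec B) \;\geq\; \prod_{b \in B} \pr(A \prec x \prec b) \cdot \pr(A \prec x)^{1-|B|} \;\geq\; \bigl(\eps^{|A|}\bigr)^{|B|} \;\geq\; \eps^{w^2},
\]
combining the bound from the first step with $\pr(A \prec x) \leq 1$.

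The main item to verify carefully, rather than a genuine obstacle, is that $P^b$ and $P_A$ really are acyclic and that their linear extensions exactly realize the corresponding conditional events in $P$. Both follow directly from the structural hypothesis that $D$ is an ideal and $U$ is a filter with $x\notin D\cup U$, which prevents the added relations from ever conflicting with existing ones (no $a\in A$ can exceed $x$, and no $b\in B$ can be dominated by $x$). Beyond this, the argument is a routine pair of XYZ applications combined with the identity $\pr(E\cap F) = \pr(E\mid F)\pr(F)$.
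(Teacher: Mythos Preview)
Your proof is correct and follows essentially the same approach as the paper: two applications of XYZ, first bundling one side and then the other. The only difference is the order---you first fix $b$ and bound $\pr(A\prec x\prec b)\ge\eps^{|A|}$, then combine over $B$, whereas the paper first fixes $a$ and bounds $\pr(a\prec x\prec B)\ge\eps^w$, then combines over $A$; your explicit construction of the augmented posets $P^b$ and $P_A$ is a cleaner way to say what the paper leaves implicit when it applies XYZ ``conditionally.''
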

For Theorem~\ref{Twsig'} (or \ref{Twsig}), notice that the assumptions of 
Theorem~\ref{Cwsig} hold if $w(P)\le w$ and $\gd_x\le 1/2-\eps$ 
(with $D=\{y:\prr(x\succ y)\geq 1/2+\eps\}$, $U=\{y:\prr(x\prec y)\geq 1/2+\eps\}$, and
$2\eps$ in place of $\eps$ in \eqref{paxb}).
Thus $\prr(f(x) = |A|+1) \geq (2\eps)^{w^2}$, which is $\gO(1)$ if $w$ and $\eps>0$ are fixed.
(So if $w(P)=O(1)$ and $q(x) =o(1)$, then $\eps$ must be $o(1)$.)


\nin
\emph{Proof of Theorem~\ref{Cwsig}.}
According to \eqref{paxb}
(with $a$ and $b$ always in $A$ and $ B$ respectively),
\[
\prr(x\prec b|a\prec x) \geq \eps/\prr(a\prec x) \,\,\,\forall a,b,
\]
and Theorem~\ref{TXYZ} gives, for any $a$,
\[
\prr(x\prec B|a\prec x)\geq \left[\eps/\prr(a\prec x)\right]^{|B|}
\geq \left[\eps/\prr(a\prec x)\right]^{w},
\]
whence $\prr(a\prec x\prec B)\geq \eps^w$. 
So 
$\prr(a\prec x|x\prec B)\geq \eps^w/\prr(x\prec B)$, and a second application of 
Theorem~\ref{TXYZ} gives 
$\prr(A\prec x|x\prec B)\geq \left[\eps^w/\prr(x\prec B)\right]^w$ and
$\prr(A\prec x\prec B)\geq \eps^{w^2}$.

\qed
 
\nin

\section{Incomparable Pairs and Average Variance}\label{IPAV}

We now turn to Theorem~\ref{pi-var}. Since the theorem is not true ``locally'' 
(that is, $\pi(x)$ large doesn't imply $\gs(x)$ large; see the paragraph preceding the 
statement of the theorem),
a basic difficulty is understanding where, 
given only that some $\pi(x)$ is large, 
one should look to find elements of $P$ with 
large $\gs$.  

For disjoint $A,B\sub P$, call the pair $(A,B)$ \emph{incomparable} if $a\nsim b$ for all 
$(a,b)\in A\times B$,
and call an incomparable $(A,B)$
\emph{$\mu$-near maximum}---or a \emph{$\mu$-pair}---if $|A||B|\ge \mu |C||D|$ for 
every incomparable $(C,D)$. 
Theorem~\ref{pi-var} follows from the next assertion.

\begin{thm}\label{pi-var-gen} 
For all $\mu>0$ and $L$ there is a $K$ such that 
if $(A,B)$ is a maximal
$\mu$-pair with $|B|\ge \max\{\mu|A|, K\}$, then 
\beq{sumxA}
\sum_{x\in A}\gs^2(x)\ge L|A|.
\enq
\end{thm}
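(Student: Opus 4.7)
The plan is to argue by contrapositive: assume $\sum_{x\in A}\gs^2(x)<L|A|$ and aim to bound $|B|\le K(\mu,L)$, contradicting $|B|\ge K$. First, by Markov, $A':=\{x\in A:\gs^2(x)\le 2L\}$ has $|A'|\ge|A|/2$; each $x\in A'$ satisfies $q(x)=\gO(1/\sqrt{L})$ by Corollary~\ref{Cgsq}, and combining Stanley log-concavity (Theorem~\ref{Stanley}) with Chebyshev shows the distribution of $f(x)$ is concentrated on an interval of length $O_L(1)$ about its mean $m_x$.

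Second, fix a small $\eps>0$ and, for each $x\in A'$, partition $B=B_x^-\sqcup B_x^0\sqcup B_x^+$, where $B_x^-:=\{b:\prr(b\prec x)\ge 1-\eps\}$, $B_x^+:=\{b:\prr(b\prec x)\le\eps\}$, and $B_x^0$ is the ``boundary.''  Using $f(x)=(\text{const})+\sum_{y\not\sim x}\mbone[y\prec x]$, together with the fact that the summands are \emph{pairwise positively correlated} (an immediate consequence of Shepp's XYZ inequality, Theorem~\ref{TXYZ}), one gets
\[
\gs^2(x)\ge\sum_{b\in B}\prr(b\prec x)\prr(x\prec b)\ge\eps(1-\eps)\,|B_x^0|,
\]
so $|B_x^0|\le 2L/[\eps(1-\eps)]$: each $x\in A'$ cleanly splits $B$ into a ``below'' and an ``above'' part, with a uniformly bounded boundary.

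Third, order $A'=\{x_1,\dots,x_s\}$ by $m_{x_i}$. Once $m_{x_j}-m_{x_i}$ exceeds a threshold $T=T(L,\eps)$ the concentration forces $\prr(x_i\prec x_j)\ge 1-\eps$, and hence $B_{x_i}^-\sub B_{x_j}^-\cup B_{x_j}^0$ (up to the boundary). So the sets $\{B_{x_i}^-\}$ form an approximately nested chain, partitioning $B$ (modulo boundaries) into layers $B_k:=B_{x_{k+1}}^-\sm B_{x_k}^-$, each of which sits between $x_k$ and $x_{k+1}$ in a typical extension.

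Finally, I would combine this layered picture with the maximality hypothesis: inclusion-maximality of $(A,B)$ as a $\mu$-pair forces every $z\in P\sm(A\cup B)$ to be comparable to some element of $A$ \emph{and} to some element of $B$ --- otherwise one of $(A\cup\{z\},B)$ or $(A,B\cup\{z\})$ would be a strictly larger $\mu$-pair. The hope is that if $|B|$ is much larger than some $K(\mu,L)$, the layered structure together with these forced comparabilities allows one to extract a new incomparable pair $(C,D)$ with $|C||D|>|A||B|/\mu$, contradicting $(A,B)$ being a $\mu$-pair. The \textbf{main obstacle}, and what I expect to be the technical heart of the actual proof, is this last step: constructing $(C,D)$ requires carefully matching the mode-ordering of $A'$ against the (only one-sided) comparabilities on $P\sm(A\cup B)$, especially since $A$ need not itself be an antichain, and since the hypothesis $|B|\ge\mu|A|$ has to be used to guarantee enough room in $B$ relative to $A$ for the construction to gain a size factor of $1/\mu$. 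I would expect the actual argument to proceed via a more subtle construction --- perhaps iteratively refining $(A,B)$ --- rather than a single-step extraction of $(C,D)$.
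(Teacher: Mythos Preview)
Your first three steps are correct and pleasant: the Markov selection of $A'$, the variance lower bound $\gs^2(x)\ge\sum_{b\in B}\prr(b\prec x)\prr(x\prec b)$ via nonnegative pairwise covariances (which is indeed XYZ), and the resulting bound $|B_x^0|=O_L(1)$ all hold.  But the argument stops exactly where the content is, and you say so yourself: no construction of a contradicting pair $(C,D)$ is given, and I do not see how one could be extracted from your layered picture.  The information you have accumulated --- each $x\in A'$ splits $B$ into below/boundary/above with bounded boundary, and these splits are approximately nested --- does not distinguish $|B|=10^6|A|$ from $|B|=|A|$: in either case you get roughly $|A'|$ layers summing to $|B|$, with no lower bound on any individual layer and no handle on elements outside $A\cup B$ beyond ``each is comparable to something in $A$ and something in $B$,'' which is far too weak to manufacture $(C,D)$ with $|C||D|>|A||B|/\mu$.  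The hypothesis $|B|\ge\mu|A|$ never enters.

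The paper's route is entirely different and does not attempt a direct contrapositive.  It \emph{reduces} Theorem~\ref{pi-var-gen} to the special case (Theorem~\ref{main-lemma}) where $P$ is a union of two chains $X,Y$ with all cross-relations pointing one way (``unidirectional'').  The reduction partitions $P\sm(A\cup B)$ into $C=\{z:z<w\text{ for some }w\in A\cup B\}$ and $D=P\sm(A\cup B\cup C)$, uses maximality to show that any cross-block relation $u<v$ has $u\in C$ or $v\in D$, conditions on the internal orders to make $A,B,C,D$ chains (variance only drops under conditioning), and then conditions further on the orders within $C\cup A$ and $B\cup D$ to land in a random width-$2$ unidirectional poset in which the upper half of $A$ and lower half of $B$ form a $(\mu/4)$-pair.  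The genuine ``technical heart'' is then the proof of Theorem~\ref{main-lemma} in Section~\ref{SecVW2P}, carried out by a detailed analysis of the events $\Psi_{i,j}$ using Lemmas~\ref{BL1} and~\ref{BL2} --- nothing resembling your proposed extraction of a large incomparable pair.
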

\nin
Thus the conditions on $(A,B)$ imply that even the \emph{average} variance
of elements of $A$ is large.
In the situation of Theorem~\ref{pi-var}, we may 
of course take $\mu=1$:  choose $(A,B)$ incomparable with 
$|A||B|$ maximum (so $|A||B|\geq 1\cdot \pi(P) =\go(1)$) and $|B|\geq |A|$.

In the rest of our discussion we will usually be working 
with $P$ satisfying
\beq{Phyps}
\mbox{$P=X\sqcup Y$ with $X=\{x_1<\cdots<x_m\}$ and 
$Y=\{y_1<\cdots<y_n\}$.}
\enq
We call such a $P$ \emph{unidirectional} if it has no relations $x_i>y_j$.

We next derive Theorem \ref{pi-var-gen} from the following variant, which is proved in 
the Section~\ref{SecVW2P}.

\begin{thm}\label{main-lemma} 
For all $\mu>0$ and $L$ there is a $K$ such that if
 $P$ is as in \eqref{Phyps} and unidirectional,
and $(A,B)$ is a $\mu$-pair with $A\sub X$, $B\sub Y$
and $|B|\ge \max\{\mu|A|, K\}$, then 
\[    
\sum_{x\in A}\gs^2(x)\ge L|A|.
\]    
\end{thm}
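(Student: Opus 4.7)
By unidirectionality, $P$ is determined by a non-decreasing function $\vp:[m]\to\{0,\dots,n\}$ with $x_i<y_j$ iff $j>\vp(i)$, and incomparability of $(A,B)$ amounts to $x_i\not<y_j$ for every $(x_i,y_j)\in A\times B$; in particular $B\sub\Pi(x)\cap Y$ for every $x\in A$. Let $A^*\supseteq A$ and $B^*\supseteq B$ be the maximal incomparable extensions (a final segment of $X$ and an initial segment of $Y$, respectively, maximal given each other). The $\mu$-pair hypothesis then yields $|A|\ge\mu|A^*|$ and $|B|\ge\mu|B^*|$, so that any ``rectangle'' in $P$ has area at most $|A||B|/\mu$.

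\textbf{Covariance lower bound.} Fix $x=x_i\in A$ and set $p_y(x)=\prr(y\prec x)$. Since $X$ is a chain, $\Pi(x)\sub Y$, and $f(x)-i=\sum_{y\in\Pi(x)\cap Y}\mbone[y\prec x]$. For $y<y'$ in $Y$, transitivity gives $\{y'\prec x\}\sub\{y\prec x\}$, so the covariance of $\mbone[y\prec x]$ and $\mbone[y'\prec x]$ is nonnegative. Hence
\[
\gs^2(x)\;\ge\;\sum_{y\in\Pi(x)\cap Y}p_y(x)(1-p_y(x))\;\ge\;\sum_{y\in B}p_y(x)(1-p_y(x)).
\]
Letting $T=\{(x,y)\in A\times B:p_y(x)\in[\tfrac14,\tfrac34]\}$ and summing over $x\in A$,
\[
\sum_{x\in A}\gs^2(x)\;\ge\;\sum_{(x,y)\in A\times B}p_y(x)(1-p_y(x))\;\ge\;\tfrac{3}{16}|T|,
\]
so it suffices to produce $|T|\ge\tfrac{16L}{3}|A|$ whenever $|B|\ge K=K(\mu,L)$.

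\textbf{Main step: producing many balanced pairs.} This is the technical core. My plan is a contradiction argument: assume $|T|<\tfrac{16L}{3}|A|$. Since $p_y(x)$ is monotone---increasing in $x\in A$, decreasing in $y\in B$---the decided pairs in $A\times B$ split into two regions (a high-$p$ region and a low-$p$ region) separated by a very thin monotone staircase. To derive a contradiction I would condition on the positions of $P\sm(A\cup B)$ in the extension; given this, the conditional law of $f|_{A\cup B}$ is uniform over valid interleavings of $A$ and $B$ into the remaining slots, subject to constraints imposed by the outside relations. The $\mu$-pair hypothesis, via $|A|\ge\mu|A^*|$ and $|B|\ge\mu|B^*|$, forces the staircase $\vp$ outside the $(A,B)$-rectangle to be loose on average: a genuinely tight staircase would produce a larger incomparable rectangle in $P$, contradicting the $\mu$-pair property. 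Combining this slack with Stanley log-concavity (Theorem~\ref{Stanley}) applied to each conditional distribution of $f(x)$ for $x\in A$, and invoking Shepp's XYZ inequality (Theorem~\ref{TXYZ}) to convert ``rare event'' bounds into structural rectangle bounds, I would force a positive fraction of $(x,y)\in A\times B$ to lie in $T$, contradicting the assumed smallness of $|T|$.

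The most delicate point is quantifying the implication ``$\mu$-pair slack $\Rightarrow$ positive conditional variance on average.'' I expect this to be the bulk of the work: it will require a careful double counting over the conditioning space, split by the local shape of $\vp$ around $A\cup B$, and then averaging log-concave conditional variance bounds via a convexity-type argument to beat the $\tfrac{16L}{3}$ threshold once $|B|$ exceeds the promised $K(\mu,L)$.
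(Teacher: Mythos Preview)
Your covariance lower bound is correct and clean: writing $f(x)-i=\sum_{y\in\Pi(x)\cap Y}\mbone[y\prec x]$ and using $\{y'\prec x\}\sub\{y\prec x\}$ for $y<y'$ to get nonnegative covariances does give $\gs^2(x)\ge\sum_{y\in B}p_y(x)(1-p_y(x))$, and the reduction to producing many $(x,y)$ with $p_y(x)\in[1/4,3/4]$ is valid.  But this is only the easy opening move; everything you label ``Main step'' is a plan, not a proof, and the plan does not contain the ideas that actually do the work.

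Two concrete gaps.  First, your proposed conditioning on the positions of $P\sm(A\cup B)$ does not simplify the problem in any usable way: the ``constraints imposed by the outside relations'' are precisely the content of the function $\vp$ restricted to $A$ and to $X\sm A$ against $B$, and after conditioning you still face essentially the original difficulty of ruling out that each $x\in A$ has its $p_y(x)$-transition concentrated on a short interval of $y$'s.  Second, your heuristic ``a genuinely tight staircase would produce a larger incomparable rectangle'' is not true as stated: the $\mu$-pair hypothesis bounds areas of incomparable rectangles, but says nothing directly about the steepness of $\vp$ \emph{inside} the $(A^*,B^*)$ rectangle, and it is exactly this local steepness (or its failure) that governs whether individual $p_y(x)$-transitions are sharp.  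You invoke Stanley and XYZ at the end, but neither gives the monotonicity-under-conditioning that is actually needed here; the relevant tool is the Graham--Yao--Yao inequality (Theorem~\ref{GYY}), which you do not mention.

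The paper's argument is quite different in mechanism.  It works pointwise, showing that $q(x):=\max_j\pr(\Psi_{i,j})$ is small for all but $O_{\mu,\eps}(1)$ elements $x\in A$ (hence $\gs(x)$ is large for those $x$ via Corollary~\ref{Cgsq}).  The heart is a ``slope'' argument: if many $x_{i_k}\in A$ had $q(x_{i_k})>\eps$, witnessed by $\Psi_{i_k,j_k}$, then the increments $r_k=(j_{k+1}-j_k)/(i_{k+1}-i_k)$ are shown (via GYY and two calibration lemmas, Lemmas~\ref{BL1} and \ref{BL2}) to be bounded below, to grow geometrically, and to be bounded above---three conditions that together force $t=O(1)$.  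None of these steps is visible in your sketch, and I do not see how your conditioning/double-counting outline would recover them.
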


\begin{proof}[Proof of Theorem~\ref{pi-var-gen} assuming Theorem~\ref{main-lemma}]
Let $K$ be as in Theorem~\ref{main-lemma} with 
$2L$ in place of $L$ and $\mu/4$ in place of $\mu$,
and let $A$, $B$ be as in Theorem~\ref{pi-var-gen}.
Set
\[
C=\{x\in P\sm(A\cup B): x<z\text{ for some }z\in A\cup B\}
\] 
and
\[
D=P\sm(A\cup B\cup C),
\]
and notice that 
\beq{ifuv}
\mbox{if $u<v$ and $u,v$ are in different members of $\{A,B,C,D\}$, then $u\in C$ or $v\in D$.}
\enq
\emph{Proof.}  
We cannot have $u\in D$ since this would forbid $u< v\in A\cup B\cup C$.
If $u\in A$ then $v\not \in B$ (since 
$(A,B)$ is incomparable); and if $v\in C$, say $v<w\in A\cup B$,
then $w\in A$ (since $w\sim u\in A$) and $v\not\sim B$ (or one of $u,w$ 
is related to $B$); so $(A\cup \{v\},B)$ is incomparable, contradicting maximality of $(A,B)$.
Of course the same discussion holds with $B$ in place of $A$, so $u\not\in C$ implies 
$v\in D$ as desired. \qed

We next observe that it is enough to show \eqref{sumxA} when $A,B,C,D$ are chains,
since (i) $(A,B)$ is a $\mu$-pair in any $Q$ obtained by adding relations to $P$ without 
adding any relations between $A$ and $B$, and (ii) with $\vs$ encoding the restrictions of $\prec$
to $A,B,C$ and $D$,
\[
(\gs^2(x)=)\,\,\,
\Var(f(x)) = \E[\Var(f(x)|\vs)] + \Var(\E[f(x)|\vs])\geq  \E[\Var(f(x)|\vs)].
\]

Let $a$ and $b$ be the centers of $A$ and $B$, 
and assume for simplicity that $|A| $ and $|B|$ are odd.
Since the statement in Theorem~\ref{pi-var-gen} doesn't change if we replace $P$ by its
``dual'' (obtained by reversing the order relations of $P$), we may assume 
that $\prr(a\succ b)\ge 1/2$; so with $A'=\{x\in A:x\geq a\}$ and 
$B'=\{y\in B:y\le b\}$, we have $\prr(A'\succ B')\ge 1/2$.

Let $Q$ be the (random) poset obtained by specifying the restrictions of $\prec$ to 
$C\cup A$ and $B\cup D$ (and closing transitively), 
and let $\R$ be the event that $a\not\sim b$ in $Q$.
Then $C\cup A$ and $B\cup D$
are chains of the width 2 poset $Q$, and there are no relations $x>y$ with 
$x\in C\cup A$ and $y\in B\cup D$ (since \eqref{ifuv} says this is true in $P$).
In particular, $\prr(\R)\ge \prr(a\succ b)\ge 1/2$.

But if $\R$ holds, then $(A',B')$ is incomparable in $Q$, and thus a $(\mu/4)$-pair (since 
$|A|'|B'|\ge |A||B|/4$) with $|B'|\ge \mu|A'|$;
so Theorem \ref{main-lemma}, with our choice of $K$, gives
\[
\sum_{x\in A}\Var(f(x))\ge
\prr(\R)\sum_{x\in A'} \Var(f(x)\mid \R) \ge 2L |A'|= L|A|.
\]
\end{proof}

\section{Variance in Width-2 Posets}\label{SecVW2P}

It remains to prove Theorem \ref{main-lemma}, which we will do here
following a little more preparation.

The main tools underlying our argument are Stanley's Theorem~\ref{Stanley}
(used via Corollary~\ref{Cgsq}) and the 
following classic result of Graham, Yao and Yao \cite{GYY80}.

\begin{thm}\label{GYY} 
For $P$ as in \eqref{Phyps}, $x\in X$, $y\in Y$, $I\sub X\times Y$,
and $\R=\{x'\prec y' \,\,\forall (x',y')\in I\}$,
\[
\prr(x\prec y\mid \R)\,\ge \,\prr(x\prec y).
\]
\end{thm}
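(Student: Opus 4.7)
The plan is to derive this as a consequence of the FKG inequality applied to a natural distributive lattice structure on $E(P)$. Write $X=\{x_1<\cdots<x_m\}$ and $Y=\{y_1<\cdots<y_n\}$, and for each linear extension $\prec$, let $\pi(i)$ denote the number of elements of $Y$ preceding $x_i$ in $\prec$. The map $\prec\mapsto\pi$ is a bijection from $E(P)$ onto the set $\mathcal{L}\sub\{0,1,\ldots,n\}^m$ of non-decreasing sequences $\pi$ satisfying the one-sided coordinate bounds imposed by the poset: $\pi(i)\ge j$ whenever $x_i>y_j$ in $P$, and $\pi(i)\le j-1$ whenever $x_i<y_j$ in $P$ (with no constraint when $x_i\not\sim y_j$).

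Equip $\{0,1,\ldots,n\}^m$ with the componentwise partial order. The structural point is that $\mathcal{L}$ is a distributive sublattice: componentwise $\max$ and $\min$ both preserve non-decreasingness, and clearly preserve any one-sided coordinate bound. Hence the uniform measure on $\mathcal{L}$ --- trivially log-supermodular --- satisfies the FKG inequality, so any two down-sets of $\mathcal{L}$ are positively correlated under this measure.

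To finish, observe that the event $\{x_{i_0}\prec y_{j_0}\}$ translates under the bijection to $\{\pi(i_0)\le j_0-1\}$, a down-set of $\mathcal{L}$. Therefore $\R$, an intersection of such down-sets over $(x_{i_0},y_{j_0})\in I$, is itself a down-set, and so is $\{x\prec y\}$. FKG then gives $\prr(\{x\prec y\}\cap\R)\ge \prr(x\prec y)\,\prr(\R)$, which rearranges to the claim after dividing by $\prr(\R)$.

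The only genuine content is the lattice encoding: once one sees $E(P)$ as a distributive lattice on which both $\{x\prec y\}$ and $\R$ are down-sets, the inequality reduces to a direct application of FKG. The main obstacle is thus conceptual rather than technical --- recognizing the right lattice structure --- since verifying closure under $\max$ and $\min$ and checking that the relevant events are down-sets are each immediate from the encoding.
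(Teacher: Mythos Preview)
Your proof is correct. The paper does not actually prove Theorem~\ref{GYY}; it is quoted as a classical result of Graham, Yao and Yao \cite{GYY80}, and the Remark following the statement only describes the \emph{original} proof, which encodes extensions as lattice walks from $(0,0)$ to $(m,n)$ and argues via an explicit path-switching injection.

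Your argument takes a genuinely different route: rather than a bijective/injection argument on paths, you encode extensions as points $\pi\in\{0,\dots,n\}^m$, observe that the feasible set $\mathcal{L}$ is a sublattice of a product of chains (closure under coordinatewise $\max$/$\min$ being immediate from the one-sided nature of the constraints), and then invoke FKG for the uniform measure on $\mathcal{L}$ to correlate the two down-events. This is entirely valid and is in the same spirit as Shepp's FKG proof of the XYZ inequality (Theorem~\ref{TXYZ}). What it buys is brevity and a certain uniformity with the other correlation tools already in play in the paper; what the original combinatorial proof buys is that it avoids any appeal to FKG and makes the monotonicity visible at the level of individual extensions. One small remark: your final division by $\prr(\R)$ tacitly assumes $\prr(\R)>0$, which is the only case in which the conditional probability is meaningful.
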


\nin
\emph{Remark.}
The original proof of Theorem~\ref{GYY} 
(and its precursor in \cite{CFG})
viewed linear extensions of $P$ as in \eqref{Phyps} as walks on the lattice of ideals of $P$, 
members of which may be identified 
with elements of $\{0\dots m\}\times \{0\dots n\}$; 
so
$(i,j)$ corresponds to the ideal generated by $\{x_i,y_j\}$,
and walks go from $(0,0)$ to $(m,n)$
via increments $(1,0)$ and $(0,1)$, where (e.g.) a step
$(i,\cdot)\ra (i+1,\cdot)$ corresponds to 
adding $x_{i+1}$ to an extension that we are specifying by adding elements of $P$ one at a time.
The same viewpoint is taken
by Chan, Pak, and Panova \cite{CPP22} in their proof of the \emph{Cross Product Conjecture} 
for width $2$ posets,
and in the proofs of Theorem~\ref{pi-var} and
Conjecture~\ref{maxC} for width 2 in \cite{Aires},
and was helpful in discovering the present argument.
We will not use this formulation---or attempt translations---here, but a reader may (or may not)
find the perspective helpful in visualizing what's happening below.

\emph{For the rest of our discussion $P$ is as in \eqref{Phyps}}
(but need not satisfy the conditions of Theorem~\ref{main-lemma}
until we come to the theorem's proof at the end of the section.)
With $x$'s and $y$'s always taken to lie in $X$ and $Y$ respectively, we define r.v.s
\beq{g(x)}
g(x) = |\{y: y\prec x\}|.
\enq
(So $g(x_i)=f(x_i)-i$.)  
We will find one small use for an important geometric result of Gr\"unbaum~\cite{Grunbaum},
which in the present setting implies that (for any $x$)
 \beq{grun}
\mbox{each of $\pr(g(x)\geq \E g(x))$, $\pr(g(x)\le \E g(x))$ is at least $1/e$.}
\enq
(More generally, $\pr(f(x)\geq h(x))> 1/e$ for \emph{any} $P$ and $x$; see 
\cite{Karzanov-Khachiyan,Kahn-Linial,Fri93,AK25} for more substantial uses of 
Gr\"unbaum's theorem and variants.)

The case $P=X+Y$ 
will be important here,
and for this we will need the easily checked
\beq{binom'}
|E(P)| =\C{m+n}{m}
\enq
and
\beq{Egxi}
\E g(x_i) = i\cdot n/(m+1).
\enq

\emph{From this point we assume $P$ is (as in \eqref{Phyps} and) unidirectional,
and use $i$ and $j$ 
for indices from $[m]$ and $[n]$ respectively.}
We will mainly be interested in the events
\[
\Psi_{i,j}=\{y_j\prec x_i\prec y_{j+1}\},
\]

\nin
and will also make use of
$\Phi_{j,i}=\{x_i\prec y_j\prec x_{i+1}\}$
(with $\Psi_{i,0}:=\{x_i\prec y_1\}$ and $\Phi_{j,m}:=\{y_j\succ x_m\}$).
The effect of conditioning on such events---which we will do repeatedly---is easily understood:

\begin{obs}\label{PQobs}
For $i,j\in [m]\times [n]$, 
$Q=P[\{x_1\dots x_{i-1}\}\cup \{y_1\dots y_{j}\}]$,
and $E$ any event involving only elements of $Q$,
\[
\pr(E\mid \Psi_{i,j}) =\pr_Q(E)
\]
\end{obs}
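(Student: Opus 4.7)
The plan is to show that conditioning on $\Psi_{i,j}$ is equivalent to fixing which elements of $P$ come before $x_i$ in the extension, after which the relative order of those elements is uniform among linear extensions of $Q$.

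First I would verify that under $\Psi_{i,j}$, the set of elements preceding $x_i$ in $\prec$ is exactly $Q = \{x_1\dots x_{i-1}\} \cup \{y_1\dots y_j\}$. Indeed, $x_1<\cdots<x_{i-1}<x_i$ forces $x_1\dots x_{i-1}$ to precede $x_i$, and $y_1<\cdots<y_j$ together with $y_j\prec x_i$ forces $y_1\dots y_j$ to precede $x_i$. Conversely, unidirectionality means none of $x_{i+1}\dots x_m$ can be below $y_{j+1}\dots y_n$, so the chain structure plus $x_i\prec y_{j+1}$ and $x_i < x_{i+1}$ implies every element of $P\sm(Q\cup\{x_i\})$ is forced to follow $x_i$. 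Thus $x_i$ sits at position $|Q|+1=i+j$ deterministically.

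Next I would use this to construct an explicit bijection
\[
\Phi:\{f\in E(P):\Psi_{i,j}(f)\text{ holds}\}\;\longrightarrow\; E(Q)\times E(Q'),
\]
where $Q' = P[\{x_{i+1}\dots x_m\}\cup\{y_{j+1}\dots y_n\}]$, sending $f$ to the pair $(f\mid_Q,\,f\mid_{Q'})$ of induced orderings. Both components are genuine linear extensions of the respective induced subposets, and any pair in $E(Q)\times E(Q')$ can be uniquely reassembled by placing $Q$ in the first $|Q|$ positions, then $x_i$, then $Q'$; checking that the reassembled sequence respects all relations of $P$ uses once more that $P$ is unidirectional and that $X,Y$ are chains.

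Since $\Phi$ is a bijection, the conditional distribution of $f\mid_Q$ given $\Psi_{i,j}$ is uniform on $E(Q)$, which is precisely the statement. There is no real obstacle here — the content is entirely combinatorial and the unidirectional/chain structure does all the work. The only thing to be careful about is the bookkeeping when $j=0$ or $j=n$ (or analogously in the $\Phi_{j,i}$ variant with $i=m$), which is why the boundary events were set up with the conventions $\Psi_{i,0}=\{x_i\prec y_1\}$ and $\Phi_{j,m}=\{y_j\succ x_m\}$; in those cases $Q$ or $Q'$ degenerates appropriately and the same argument goes through verbatim.
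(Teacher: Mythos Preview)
Your proposal is correct and is simply a fleshed-out version of the paper's own one-line parenthetical justification (``since the conditioning puts $x_i$ above everything in $Q$ and below everything else''). One small slip worth flagging: the claim that unidirectionality means ``none of $x_{i+1}\dots x_m$ can be below $y_{j+1}\dots y_n$'' is backwards---unidirectionality forbids relations $x>y$, not $x<y$---and is in any case unnecessary at that point, since the chain structure of $X,Y$ together with $\Psi_{i,j}$ already forces every element of $P\setminus(Q\cup\{x_i\})$ to follow $x_i$; unidirectionality is genuinely needed only in the reassembly step, where you correctly invoke it.
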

\nin
(since the conditioning puts $x_i$ above everything in $Q$ and below everything else).
Analogous statements apply to conditioning on $\Psi_{i,j}\wedge \Psi_{k,\ell}$
or $\Psi_{i,j}\wedge \Phi_{\ell,k}$.

\begin{lemma}\label{BL1}
{\rm (a)}If $i< \eps j$ then $\pr(\Psi_{i,j})<\eps$.

\nin
{\rm (b)}
If $P=X+Y$ and $m<\eps n-1$, then $\pr(\Psi_{i,j})<\eps$
for any $i$ and $j$.
\end{lemma}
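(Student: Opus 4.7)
The plan is to handle the two parts separately: (b) via a direct Vandermonde-type estimate, and (a) by conditioning on the first $i+j$ positions of $\sigma$ and then establishing an inductive inequality on counts of linear extensions.

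For (b), since $P=X+Y$ we have $|E(P)|=\binom{m+n}{m}$ and, by an elementary count of interleavings of the two chains,
\[
\pr(\Psi_{i,j}) \;=\; \binom{i+j-1}{i-1}\binom{m+n-i-j}{m-i}\Big/\binom{m+n}{m}.
\]
I would recognize the numerator as the $k=i-1$ summand in the Vandermonde expansion $\binom{m+n-1}{m-1}=\sum_{k}\binom{i+j-1}{k}\binom{m+n-i-j}{m-1-k}$, which gives $\pr(\Psi_{i,j})\leq\binom{m+n-1}{m-1}/\binom{m+n}{m}=m/(m+n)\leq m/n<\eps$ by the hypothesis $m<\eps n-1$.

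For (a), on $\Psi_{i,j}$ the first $i+j$ positions of $\sigma$ are forced (by the chain structure) to form $T:=\{x_1,\dots,x_i,y_1,\dots,y_j\}$, an ideal of $P$ when $\Psi_{i,j}\neq\emptyset$. By the conditioning principle behind Observation~\ref{PQobs}, $\sigma|_T$ is then uniform in $E(P[T])$, so
\[
\pr(\Psi_{i,j}) \;\leq\; \pr_{P[T]}(\sigma_T(i+j)=x_i).
\]
It therefore suffices to show $\pr_{P[T]}(\sigma_T(i+j)=x_i)\leq i/(i+j)$, from which $i<\eps j$ gives $\pr(\Psi_{i,j})\leq i/(i+j)<\eps j/(i+j)\leq\eps$.

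This last bound is the main step and what I expect to be the main obstacle. The maxima of $P[T]$ are exactly $\{x_i,y_j\}$ (incomparable in $P$, else $\Psi_{i,j}$ is empty); writing $B=|E(P[T\setminus\{x_i\}])|$ and $C=|E(P[T\setminus\{y_j\}])|$ one has $|E(P[T])|=B+C$, so the desired bound becomes $jB\leq iC$. I would prove $jB\leq iC$ by induction on $|T|=i+j$, with base cases $i=1$ and $j=1$ handled by direct counts. For the inductive step, decompose each count by the last element of the extension: let $D=|E(P[T\setminus\{x_i,y_j\}])|$, $E_2=|E(P[T\setminus\{y_{j-1},y_j\}])|$, and let $B'$ be the number of extensions of $P[T\setminus\{x_i\}]$ ending in $x_{i-1}$. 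Then $B=B'+D$, while $C=D+E_2$ (since $x_i$ and $y_{j-1}$ are always the maxima of $P[T\setminus\{y_j\}]$: $x_i<y_{j-1}$ would transitively force the excluded $x_i<y_j$). Applying the inductive hypothesis to the ideals $T\setminus\{x_i\}$ and $T\setminus\{y_j\}$ gives $jB'\leq(i-1)D$ and $(j-1)D\leq iE_2$, whence
\[
iC-jB \;=\; (i-j)D+iE_2-jB' \;\geq\; (i-j)D+(j-1)D-(i-1)D \;=\; 0,
\]
closing the induction.
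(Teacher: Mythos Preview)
Your proof is correct, but it follows a different route from the paper's.

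For (a), the paper conditions on the prerequisite event $\{x_i\prec y_{j+1}\}\wedge\{y_j\prec x_{i+1}\}$ and then invokes the Graham--Yao--Yao monotonicity theorem (Theorem~\ref{GYY}) to strip all remaining $X$--$Y$ relations, reducing to the free case where the conditional probability is exactly $\binom{i+j-1}{i-1}/\binom{i+j}{i}=i/(i+j)$. You instead pass to the ideal $T$ and prove the inequality $\pr_{P[T]}(x_i\text{ last})\le i/(i+j)$ by a direct induction on $|T|$, never appealing to GYY. Your induction is in effect a self-contained proof of the special case of GYY that the paper is quoting; it is more elementary but longer. (One small wording issue: what is uniform in $E(P[T])$ is $\sigma|_T$ conditioned on $\{\sigma(1),\dots,\sigma(i+j)\}=T$, not conditioned on $\Psi_{i,j}$ itself; your inequality is still correct since $\pr(\Psi_{i,j})=\pr(\text{first }i{+}j\text{ positions}=T)\cdot\pr_{P[T]}(x_i\text{ last})$.)

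For (b), the paper observes that when $P=X+Y$ the dual $P^*$ is again unidirectional, and that for every $(i,j)$ either $i<\eps j$ or $m-i+1<\eps(n-j)$, so (a) applies to $P$ or to $P^*$. Your Vandermonde argument is a genuinely different and rather clean alternative: it yields the uniform bound $\pr(\Psi_{i,j})\le m/(m+n)$ directly, with no appeal to (a) or to duality. The paper's route is shorter given that (a) and GYY are already in hand; yours is self-contained and gives a slightly sharper universal bound.
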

\nin
\emph{Proof.} (a)  It is enough to show this conditioned on
$\{x_i\prec y_{j+1}\}\wedge\{y_j\prec x_{i+1}\}$,
a prerequisite for $\Psi_{i,j}$.
By Theorem~\ref{GYY} (and unidirectionality),
the probability is then maximized
when there are no other relations between $X$ and $Y$, 
in which case it is equal to 
\[
\Cc{i-1+j}{i-1}/\Cc{i+j}{i}=i/(i+j)  < \eps.
\]
(b)  For any $i,j$, either $i<\eps j$ or $m-i+1<\eps (n-j)$; so
the assertion is given by (a) applied to either $P$
or the dual poset $P^*$.  (Note $P^*$ is still unidirectional since $P=X+Y$.)

\qed

\begin{lemma}\label{BL2} 
For any $\eps>0$ there is a $K$ such that, if 
$j, n-j> K$ and 
$\frac{n-j}{m-i}<(1+\frac{1}{K})\frac{j}{i}$, then
\[
\pr(\Psi_{i,j})< \eps.
\]
\end{lemma}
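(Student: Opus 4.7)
The plan is to extend the template of Lemma~\ref{BL1}(a) with a ``dual'' bound and then handle the remaining case via Stanley's log-concavity. First, conditioning on the prerequisite $\R = \{x_i \prec y_{j+1}\} \cap \{y_j \prec x_{i+1}\}$ and invoking Theorem~\ref{GYY} exactly as in (a) gives $\pr(\Psi_{i,j}) \le i/(i+j)$. The symmetric argument applied to a $\Phi$-event in $P^*$ (after swapping and relabeling the chains $X$ and $Y$, which makes the resulting poset unidirectional in our sense) yields the companion bound $\pr(\Psi_{i,j}) \le (m-i+1)/(m-i+n-j+1)$. Together these dispose of the regime in which either $i/j$ or $(m-i+1)/(n-j)$ is at most $\eps$; combined with the hypothesis $(n-j)/(m-i) < (1+1/K)(j/i)$, the remaining case forces $i,\, j,\, m-i,\, n-j$ all to be $\Omega(\eps K)$ and $i/m$ close to $j/n$.

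In this ``balanced, near-diagonal'' regime, Theorem~\ref{Stanley} and Corollary~\ref{Cgsq} give $\pr(\Psi_{i,j}) = \pr(g(x_i) = j) \le O(1/\sigma(x_i))$, so it suffices to show $\sigma(x_i) = \Omega(\sqrt{\eps K})$. For the model case $P = X+Y$, the explicit hypergeometric-type formula for $\pr(g(x_i)=k)$ gives $\sigma(x_i) = \Theta(\sqrt{i(m-i)\,n(m+n)/m^3})$, which is $\Omega(\sqrt{K})$ in our regime. For general unidirectional $P$, the cross relations may a priori shrink $\sigma(x_i)$, but in the balanced regime most $y_\ell$ with $\ell$ near $j$ should remain incomparable to $x_i$, preserving the variance.

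The main obstacle is this last step: lower-bounding $\sigma(x_i)$ uniformly under the hypothesis. A natural approach is to exhibit $\Omega(\eps K)$ indices $\ell$ near $j$ whose indicators $\mathbf{1}[y_\ell \prec x_i]$ are ``nontrivial'' (i.e.\ $\pr(y_\ell \prec x_i)$ is bounded away from $\{0,1\}$) and whose pairwise correlations are controlled, so that $\Var(g(x_i)) = \Omega(\eps K)$. Here Theorem~\ref{GYY} plays a double role, furnishing both the individual comparison $\pr_P(y_\ell \prec x_i) \le i/(i+\ell)$ and enough control on the pairwise covariances to push through the variance bound.
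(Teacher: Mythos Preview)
Your proposal has two genuine gaps.

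First, the ``dual bound'' $\pr(\Psi_{i,j}) \le (m-i+1)/(m-i+n-j+1)$ cannot be obtained by the symmetry you suggest, and is in fact false for general unidirectional $P$. After dualizing and swapping $X\leftrightarrow Y$, the event $\Psi_{i,j}$ becomes a $\Phi$-type event $\{x'_{i'}\prec y'_{j'}\prec x'_{i'+1}\}$ in a unidirectional poset. Conditioned on the natural prerequisite, this event is $\{x'_{i'}\prec y'_{j'}\}$, and Theorem~\ref{GYY} now goes the \emph{wrong way}: removing $x'<y'$ relations \emph{decreases} $\pr(x'_{i'}\prec y'_{j'})$, so the Lemma~\ref{BL1}(a) template yields a lower bound, not the upper bound you want. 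Concretely: take $P$ with relations $x_k < y_{j+1}$ for every $k$, set $i=m/2$, $j$ just above $K$, and let $n$ be enormous; then $\pr(\Psi_{i,j})=\binom{m/2-1+j}{j}/\binom{m+j}{j}\approx 2^{-j}$, while your dual bound is about $m/(2n)$, which can be made far smaller. (This example violates the lemma's hypothesis, so it does not contradict the lemma---but it does show your bound, stated without that hypothesis, is wrong.)

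Second, even granting the dual bound, your ``balanced'' regime is not handled. You correctly note that cross-relations may shrink $\gs(x_i)$ in a general unidirectional $P$, but the proposed fix via indicators $\mbone[y_\ell\prec x_i]$ is not carried out---and these indicators are strongly positively correlated (indeed monotone in $\ell$), so extracting a variance lower bound from them is not routine.

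The paper avoids both issues with a move you used only for the first bound: the GYY reduction applies to the \emph{entire} lemma. Since $x_i<y_{j+1}$ is a prerequisite for $\Psi_{i,j}$, one may keep that single relation and, by Theorem~\ref{GYY}, drop all others; this passes to $X+Y$ with one added relation, where
\[
\frac{\pr(\Psi_{i,\ell-1})}{\pr(\Psi_{i,\ell})}
=\frac{1+(m-i)/(n-\ell+1)}{1+(i-1)/\ell}
\]
is an explicit binomial quotient. The hypothesis $(n-j)/(m-i)<(1+1/K)\,j/i$ (together with $j,n-j>K$) forces this ratio to be at least $1-O(1/B)$ for $B>e/\eps$ consecutive values $\ell\le j$, so that many $\pr(\Psi_{i,\ell})$ lie within a factor $e$ of $\pr(\Psi_{i,j})$; since they sum to at most $1$, $\pr(\Psi_{i,j})<\eps$. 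This is both shorter and avoids any variance computation.
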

\nin
\emph{Proof.}
We may assume 
$x_i<y_{j+1}$ (since this is a prerequisite for $\Psi_{i,j}$) and then,
by Theorem~\ref{GYY}, that there are no other relations between 
$X$ and $Y$.
The proof is then just a calculation to show (roughly) that for $\ell< j$ with
$j-\ell$ small relative to $j$, $\pr(\Psi_{i,\ell})$ is not much less than 
$\pr(\Psi_{i,j})$.  This yields many $\Psi_{i,\ell}$'s with
probabilities not much less than $\pr(\Psi_{i,j})$, which gives the lemma.
Concretely:

For $\ell\le j$, four applications of \eqref{binom'} give
\beq{Pi2j2'}
\frac{\pr(\Psi_{i,\ell-1})}{\pr(\Psi_{i,\ell})}
=\frac{\C{i+\ell-2}{\ell-1}\C{m-i+n-\ell+1}{n-\ell+1}}{\C{i+\ell-1}{\ell}\C{m-i+n-\ell}{n-\ell}}
=\frac{1+(m-i)/(n-\ell+1)}{1+(i-1)/\ell}.
\enq
Under our hypotheses (with a large enough $K$),
we may then choose $B>e/\eps$ so that for $j\geq \ell> j-B$, the r.h.s.\ of \eqref{Pi2j2'}
is at least $1-1/(B+1)$; so $|\{\ell: \pr(\Psi_{i,\ell}) > \pr(\Psi_{i,j})/e\}| \geq B$,
and the lemma follows.

\qed

\begin{proof}[Proof of Theorem~\ref{main-lemma}]
We now assume $P,A,B$ are as in Theorem~\ref{main-lemma},
with $\min(A)=\{x_{m-M+1}\}$ and $\max(B)=\{y_N\}$.
Then the pair $(\{x_{m-M+1},\dots,x_m\}, \{y_1,\dots,y_N\})$ is incomparable
(since $P$ is unidirectional),
 so $|A||B|\ge \mu MN$, implying $|A|\ge \mu M$, $|B|\geq \mu N$, and 
 $N\ge |B|\ge \mu |A|\geq \mu^2 M$.

Given $L$, choose $\eps$ so that $q(x)\le \eps$ implies $\gs^2(x)\ge 2L$
(see Corollary~\ref{Cgsq}). 
It is then enough to show---and the rest of our discussion will 
be devoted to this---that, for large enough $K$,
\beq{xAtoshow}
|\{x\in A:q(x)\le\eps\}|\geq |A|/2.  
\enq
\emph{Proof of \eqref{xAtoshow}.}
Set (recall we pretend large numbers are integers)
\[
\ell = \max\{m-5M/\mu,0\}. 
\] 
(If $\ell=0$ we may add  $x_0< \{x_1,y_1\}$ to $P$, without affecting
probabilities of interest).  Then
\beq{eitheror}
x_{\ell}<y_{ N/4}.
\enq
(If not, then
$(\{x_{\ell+1},\dots,x_m\},\{y_1,\dots,y_{ N/4}\})$ is incomparable 
with $(m-\ell)N/4= (5M/\mu) N/4> MN/\mu\ge |A||B|/\mu$, a contradiction.)

Suppose first that $N\ge 20M/(\eps\mu)$.
Here it will turn out that $q(x)<\eps$ for \emph{every}
$x\in A$; thus we show, for any $x_i\in A$ and $j\in [n]$,
\beq{prXij}
\pr(\Psi_{i,j})< \eps.
\enq

If $j\le N/2$, then we claim that \eqref{prXij} is even true ``locally'':
\beq{prPsi}
\pr(\Psi_{i,j}\mid \Psi_{m-M, j'}\wedge \Phi_{N,i'}) <\eps 
\,\,\,\,\,\mbox{for all feasible $j'\in [n]$ and $i'\in [m]$}
\enq
(where ``feasible'' means $\Psi_{m-M, j'}\wedge \Phi_{N,i'}\neq\0$, 
and if $n-M=0$ we may take $\pr(\Psi_{m-M, j'})=1$).

\nin
\emph{Proof.}  We may assume $j'\leq j$ and $i'\geq i$ (or the probability is zero).
The probability in \eqref{prPsi} is then equal to $\pr_Q(\Psi_{i,j})$, where 
$Q= \{x_{n-M+1}<\cdots < x_{i'}\}+\{y_{j'+1}<\cdots <y_{N-1}\}$
(cf.\ Observation~\ref{PQobs});
and the chain sizes $i'-(n-M)$ and $N-1-j'$
are as in Lemma~\ref{BL1}(b), since $N\ge 20M/(\eps\mu)$ and $j\le N/2$.\qed

\nin

If instead $j>N/2$, then \eqref{eitheror} says it is enough to show that
$\pr(\Psi_{i,j}|\Psi_{\ell,j'})< \eps$ for any $j'\le  N/4$,
which follows from 
Observation~\ref{PQobs} and Lemma~\ref{BL1}(a)
since
\[
i-\ell \le 5M/\mu < \eps N/4 < \eps (j-j').
\]

We turn to the more interesting case $N< 20M/(\eps\mu)$.
This will involve constants $\Bep$ and $\Cep$, which, along with the 
constants implicit in $O(\cdot)$ and $\gO(\cdot)$, are functions of $\mu$ and $\eps$.
It will be clear below that we can choose $\Bep$ and $\Cep/\Bep$ large 
enough to support what we do; roughly, we need
$\Bep$ large enough for \eqref{pryjB} and $\Tep > 4\Bep K$, where 
$T\approx \mu^3\Cep $ is as in \eqref{i2} and $K$ is from 
Lemma~\ref{BL2} (see the proof of \eqref{i2}).
We then assume $M$ and $N$ are much larger, as is true 
if $|A|$ in \eqref{xAtoshow} is large.

Set
\[
I=\{i\in [m-M+1,m]: q(x_i)> \eps\}.
\]
We will show that $|I|=O(1) $, so
\eqref{xAtoshow} holds for large enough $A$.
For each $i\in I$ there is a $j=j(i) \in \{0\dots n\}$ 
with $\pr(\Psi_{i,j})> \eps$, 
whence,
by Corollary~\ref{Cgsq}, 
\beq{pryjB}
\prr(y_{j-\Bep}\prec x_i\prec y_{j+\Bep})>  1-\eps/4
\enq
for large enough $\Bep$.  

Let $\{i_1<\cdots<i_t\}\sub I$ be maximal subject to $i_{k+1}-i_k \geq \Cep$ (for $k\in [t-1]$), 
and set $j(i_k)=j_k$. 
We will show that
\beq{tbd}
t=O(1),
\enq
whence $|I|< 2\Cep t=O(1)$, as promised.

For $k\in [t-1]$, let $r_k=(j_{k+1}-j_k)/(i_{k+1}-i_k)$.
For \eqref{tbd} it is enough to show:
\beq{i1} 
r_k= \gO(1)
\,\,\,\forall k\in [t-1];
\enq
\beq{i2}
r_{k+1}> (1+\Bep/\Tep) r_k \,\,\,\forall k\in [t-2],
\enq
with $\Tep$ equal to $\Cep$ times the lower bound in \eqref{i1}; and
\beq{i3}
r_k< 3/\eps  \,\,\,\forall k\in [t-1];
\enq
(Of course \eqref{i1} follows from its truth for $k=1$ plus \eqref{i2},
but the proof of \eqref{i2} uses \eqref{i1}.)

\mn

We now use
$a\pm b$ to denote a quantity in $(a-b,a+b)$.

\nin
\emph{Proof of \eqref{i1}.} Here we work with $g(x)$ (see \eqref{g(x)}), 
noting to begin that, for any $k\in [t]$, 
\[
j_k=\E g(x_{i_k}) \pm \Bep.
\]
(This follows from \eqref{pryjB} (used with $j=j_k$) and \eqref{grun},
which give (e.g.)
\[
\pr(j_k+\Bep > g(x_k) > \E g(x_k)) > 1/e-\eps/2>0.)
\]

It is thus enough to show that
\beq{jj}
\E [g(x_{i_{k+1}})-g(x_{i_k})] =\gO(i_{k+1}-i_k),
\enq
since, with $\gc$ the implied constant in \eqref{jj}, we then have
\eqref{i1} if $\gc \Cep$ is (slightly) large relative to $\Bep$.
(The $\gc$ produced below, roughly $\mu^3$, doesn't involve $\eps$; 
but the stronger requirement---see the proof of \eqref{i2}---that $\Tep/\Bep$ be larger 
than the $K$ of Lemma~\ref{BL2} does.)

\nin
\emph{Note}:
The argument below will twice use Observation~\ref{PQobs} to pass from $P$ 
with some conditioning to a related $Q$; here we take $g(x)$ to
continue to count elements of $Y\sm Q$ that the conditioning
puts below $x$.

\nin
\emph{Proof of \eqref{jj}}.
If $j_{k+1}\ge N/2$, then,
for any $j<N/4$ and $j'=j_{k+1}\pm \Bep$,
\begin{eqnarray}
\E [j'-g(x_{i_k})\mid \Psi_{i_{k+1},j'}\wedge \Psi_{\ell,j}] & \ge &
j'-j -(i_k-\ell)(j'-j)/(i_{k+1}-\ell)\label{js1}\\
&=& (j'-j)\cdot  (i_{k+1}-i_k)/(i_{k+1}-\ell) \,=\,  \gO (i_{k+1}-i_k).
\label{js2}
\end{eqnarray}
[\emph{Because}: 
With $Q=\{x_{\ell+1}<\cdots <x_{i_{k+1}-1}\}+\{y_{j+1}<\cdots <y_{j'}\}$,
Observation~\ref{PQobs} and Theorem~\ref{GYY} lower bound
the l.h.s.\ of \eqref{js1} by $\E_Q[(j'-j)-(g(x_{i_k})-j)]$,
which by \eqref{Egxi} is equal to the r.h.s.; and the 
l.h.s.\ of \eqref{js2} is $\gO (i_{k+1}-i_k)$ because $j'-j> N/4-\Bep$, $i_{k+1}-\ell < 5M/\mu$, and 
$N > \mu^2M$.]

It follows that
$\E[g(x_{i_{k+1}})-g(x_{i_k})\mid g(x_{i_{k+1}})=j_{k+1}\pm \Bep]
=\gO(i_{k+1}-i_k)$
(since $\pr(\cup_{j<N/4}\Psi_{\ell,j}) =1$), and, 
in view of \eqref{pryjB}, that
\[
\E[g(x_{i_{k+1}})-g(x_{i_k})]\geq \pr(g(x_{i_{k+1}})=j_{k+1}\pm \Bep)\,
\E[g(x_{i_{k+1}})-g(x_{i_k})\mid g(x_{i_{k+1}})=j_{k+1}\pm \Bep]
=\gO(i_{k+1}-i_k);
\]
so we have \eqref{jj}
in this case.
\mn

Now assume $j_{k+1}\le N/2$, so also $j_k\leq N/2$ (which is all we now use).
Then for any $j=j_k\pm \Bep$, 
\[
\E [g(x_{i_{k+1}})-j\mid \Psi_{i_k,j}]\ge
\E [g(x_{i_{k+1}})-j\mid \Psi_{i_k,j}\wedge \Phi_{N,m}\}]
\]
(where, since $\Phi_{N,m}=\{y_N\succ m\}$, the inequality is given by Theorem~\ref{GYY}).
But by Observation~\ref{PQobs}, the second expectation is the same as 
$\E_Q [g(x_{i_{k+1}})-j]$, where 
$Q=P[\{x_{i_k+1}\dots x_m\}\cup \{y_{j+1}\dots y_{N-1}\}]
= \{x_{i_k+1}<\cdots < x_m\}+\{y_{j+1}<\cdots <y_{N-1}\}$, which expectation is,
according to \eqref{Egxi}, equal to 
\[
(i_{k+1}-i_k)(N-j)/(m-i_k+1) =\gO(i_{k+1}-i_k);
\]
and combining with \eqref{pryjB} gives \eqref{jj}.

\mn
\emph{Proof of \eqref{i2}.} 
If $k$ violates \eqref{i2},
then for
$j=j_k\pm \Bep$ and $j' =j_{k+2}\pm \Bep$,
\begin{eqnarray}
(j'-j_{k+1})/(i_{k+2}-i_{k+1})&<&
(1+\Dep/\Tep)(j_{k+2}-j_{k+1})/(i_{k+2}-i_{k+1})
\label{jjii1}\\
&<&
(1+\Dep/\Tep)^2(j_{k+1}-j_k)/(i_{k+1}-i_k) \label{jjii2}\\
&<& (1+\Dep/\Tep)^2(1-\Dep/\Tep)^{-1}(j_{k+1}-j)/(i_{k+1}-i_k) \label{jjii3}\\
&<&
(1+4\Dep/\Tep)(j_{k+1}-j)/(i_{k+1}-i_k).\label{jjii4}
\end{eqnarray}
Here \eqref{jjii1} and \eqref{jjii3} are given by
\eqref{i1} 
(via
$j'-j_{k+1} < j_{k+2}-j_{k+1} +\Dep <
(1+\Dep/\Tep)(j_{k+2}-j_{k+1})$
and 
$j_{k+1}-j> j_{k+1}-j_k-\Dep > (1-\Dep/\Tep) (j_{k+1}-j_k)$);
\eqref{jjii2} is the assumption that \eqref{i2} fails; and \eqref{jjii4} holds if $\Tep > 5\Dep$.

Now assuming $\Tep/\Bep >4K$ ($K$ as in Lemma~\ref{BL2}),
Observation~\ref{PQobs} and Lemma~\ref{BL2} 
give
$\pr(\Psi_{i_{k+1},j_{k+1}}\mid \Psi_{i_1,j},\Psi_{i_{k+2},j'})\le \eps/2$ 
for any 
$j$ and $j'$ as above, and combining with \eqref{pryjB} gives 
the contradiction
\[
\pr(\Psi_{i_{k+1},j_{k+1}})<(1-\eps/2)\eps/2+\eps/2 < \eps,
\]
proving \eqref{i2}.

\mn
\emph{Proof of \eqref{i3}.} If $r_k\ge 3/\eps$, then for any $j=j_k\pm \Bep$, we have
$i_{k+1}-i_k<(\eps/2)(j_{k+1}-j)$ (using, say, $j_{k+1}-j_k>i_{k+1}-i_k>\Cep>3\Bep$).
So Observation~\ref{PQobs} and Lemma~\ref{BL1}(a) give
$\pr(\Psi_{i_{k+1},j_{k+1}}\mid \Psi_{i_k,j})\le \eps/2$,
and combining this with \eqref{pryjB} gives the desired contradiction.

\mn

This completes the proofs of Theorems~\ref{main-lemma} and \ref{pi-var}.
\end{proof}

\section{Proof of Theorem~\ref{down}}
\label{SecYd}

This is easy and we aim to be brief, omitting some routine verifications.

We first observe that (a) 
is best possible (for any $d$), while (b) is tight up to the $O_d(1)$:
for (a), let $P=\{(x,t):\mbox{$ x\in \{(1,0\dots 0),(0,1\dots 1)\}
\sub \mbN^{d-1}$, 
$t\leq |P|/2$}\}$; and for (b), 
let $P=\{te_i:i\in [d], t\leq \ell\}$, with $e_i$'s the standard basis vectors 
(so $\ell = (|P|-1)/d +1$).

\mn
\emph{Proof of Theorem~\ref{down}.}
{\rm (a)}
Since $P$ is not a chain, we may choose a direction $i$ with $|\{x_i:i\in P\}|> 1$,
and $x,y\in P$ 
with $x$ minimal subject to $x_i=\max\{z_i:z\in P\}$ 
and $y$ maximal subject to $y_i=\min\{z_i:z\in P\}$;
then $|\{z\in P:x,y\sim z\}|\leq 2$, implying $\max\{\pi(x),\pi(y)\}|\ge (|P|-2)/2$.

\nin
{\rm (b)}
We prove this with $O_d(1)=d^d$, which should be far from optimal.

Set $\ov{\pi}(x) = |P\sm \Pi(x)|$ and $|P|=n$.
For $i\in [d]$, let $\ell_i=\max\{t:te_i\in P\}$ and $m_i=|\{x\in P: x_i=\ell_i\}|$.
Then $\ell_im_i\leq n$ (since $P$ is an ideal) and $\ov{\pi}(\ell_ie_i) =\ell_i+m_i-1$.
So the theorem holds if there is an $i$ with $d\leq \ell_i\leq n/d$, 
and we assume there is not.
Call $i$ \emph{long} if $\ell_i> n/d$ and \emph{short} if $\ell_i<d$.
We may assume there is at least one $i$ of each type, since all long
would imply $|P|>n$, 
and if all are short then $n< d^d$.

Let $S$ be the set of short $i$'s, say (w.l.o.g.) $S=[s]$.
Let $\vp$ be the natural projection of $P$ on $\mbN^S$ and $B=\vp(P)\sub [d]^S$,
and for $y\in B$, set $f(y) =|\vp^{-1}(y)|$.
Then $f$ is nonincreasing (again, since $P$ is an ideal) and $f(\underline{1})\ge (d-s)n/d$
(since $\vp^{-1}(\underline{1})$ contains the disjoint intervals $[2e_i,\ell_ie_i]$, $i\in [s+1,d]$).
Choose $y$ maximal in $ B$ with $f(y)=\min\{f(z):z\in B\}$ (possible since $f$ is nonincreasing).
Then 
\[
f(y) < [n-(d-s)n/d]/(|B|-1)\leq (sn/d)/s=n/d
\]
(note $|B|\geq s+1$ since no coordinate hyperplane contains $P$),
and, for $x=(y,0\dots 0)$ ($\in P$), maximality of $y$ gives
\[
\ov{\pi}(x) = f(y) + |\{z\in B: z<y\}|  < n/d +d^d.
\]\qed

\end{document}